\newenvironment{myquote}[2]%
               {\begin{list}{}{\leftmargin#1\rightmargin#2}\item{}}%
               {\end{list}}
\newtheorem{thm}{Theorem}
\newtheorem{lem}{Lemma}
\theoremstyle{definition}
\newtheorem{defn}{Definition} 
\newtheorem{rem}{Remark}
\newtheorem{ques}{Question}
\newtheorem{cor}{Corollary}
\renewcommand{\Re}{\mathbb R}
\newcommand{\Eu}{\mathbb E}
\newcommand{\Z}{\mathbb Z}
\newcommand{\Q}{\mathbb Q}
\def\bea{\begin{eqnarray}}
\def\eea{\end{eqnarray}}
\DeclareMathOperator{\relint}{relint}
\DeclareMathOperator{\Err}{Err}
\DeclareMathOperator{\inter}{int}
\DeclareMathOperator{\conv}{conv}
\DeclareMathOperator{\card}{card}
\DeclareMathOperator{\area}{area}
\DeclareMathOperator{\fraction}{frac}
\DeclareMathOperator{\sign}{sign}
\begin{document}
\title[Equilibrium points]{On the equilibria  of finely discretized curves and surfaces}
\author[G. Domokos, Z. L\'angi \and T. Szab\'o]{G\'abor Domokos, Zsolt L\'angi \and T\'\i mea Szab\'o}

\address{G\'abor Domokos, Dept. of Mechanics, Materials and Structures, Budapest University of Technology,
M\H uegyetem rakpart 1-3., Budapest, Hungary, 1111}
\email{domokos@iit.bme.hu}
\address{Zsolt L\'angi, Dept. of Geometry, Budapest University of Technology,
Egry J\'ozsef u. 1., Budapest, Hungary, 1111}
\email{zlangi@math.bme.hu}
\address{T\'\i mea Szab\'o, Dept. of Mechanics, Materials and Structures, Budapest University of Technology,
M\H uegyetem rakpart 1-3., Budapest, Hungary, 1111}
\email{tszabo@szt.bme.hu}

\subjclass{53A05, 53Z05}
\keywords{equilibrium, convex surface, Poincar\'e-Hopf formula, polyhedral approximation.}

\begin{abstract}
Our goal is to identify the type and number of static equilibrium points 
of solids arising from fine, equidistant $n$-discretrizations of smooth, convex surfaces.
We assume uniform gravity and a frictionless, horizontal, planar support. 
We show that as $n$ approaches  infinity these numbers fluctuate around specific values which
we call the imaginary equilibrium indices associated with the approximated smooth surface.
We derive simple formulae for these numbers in terms of the principal curvatures
and the radial distances of the equilibrium points of the solid from its center of gravity.
Our results are illustrated on a discretized ellipsoid and match well the observations on natural pebble
surfaces. \end{abstract}
\maketitle

\section{Introduction}

The study of  equilibria of rigid bodies was initiated by Archimedes
\cite{Archimedes1}; his results have been used in naval design even in the 18th century (cf. \cite{Archimedes2}).
Archimedes' main concern was the number of the stable balance points of the body.

Static equilibria of convex bodies correspond to the singularities of the
gradient vector field characterizing their surface.
Modern, global theory of the generic singularities of smooth manifolds appears to start with
the papers of Cayley \cite{Cayley} and Maxwell \cite{Maxwell} who independently
introduced topological ideas into this field yielding results on
the global number of stationary points. These ideas have been further generalized
by Poincar\'e and Hopf, leading to the Poincar\'e-Hopf Theorem \cite{Arnold} on topological invariants.
If applied to generic, convex bodies, represented by gradient fields defined on the sphere, this theorem 
states that the number $S$ of `sinks' (stable equilibria), the number $U$ of `sources' (unstable
equilibria) and the number $N$ of saddles always satisfy the equation
\begin{equation} \label{Poincare}
S+U-N=2.
\end{equation}
This formula, the so-called Poincar\'e-Hopf formula can be regarded as a generalization
of the well-known Euler's formula \cite{Euler}  for convex polyhedra.

Static equilibria of polyhedra have also been investigated;
in particular, the mininal number of equilibria attracted substantial interest.
Monostatic polyhedra (i.e. polyhedra with just $S=1$ stable equilibrium point) have been studied
in \cite{Heppes}, \cite{Conway},\cite{Dawson} and \cite{DawsonFinbow}.

The total number $T$ of equlibria $(T= S+U+N)$ has also been in the focus of research.
In planar, homogeneous, convex bodies (rolling along their circumference
on a horizontal support), we have $T\geq 4$ \cite{Domokos1}.
However, convex homogeneous objects with $T=2$ exist in the three-dimensional space (cf. \cite{VarkonyiDomokos}).  Zamfirescu \cite{Zamfirescu} showed that
for \emph{typical} convex bodies, $T$ is infinite. While typical convex bodies
are neither smooth objects, nor are they polyhedral surfaces, Zamfirescu's result
strongly suggests that equilibria in abundant numbers may occur in physically relevant scenarios.

This is indeed the case if we study the surfaces of natural pebbles  which,
while rolling on a horizontal plane, are supported on their convex hull \cite{Domokosetal}.
Their convex hull is well approximated by a many-faceted polyhedron $P$, on which, by studying its detailed 3D scanned images 
one can observe large numbers of adjacent equilibria in strongly
localized \emph{flocks}. If we approximate the polyhedron $P$  by a sufficiently smooth
surface $M$, then we can see that the flocks of equilibria on $P$ appear in the close
vicinity of the (isolated) equilibrium points of $M$ (cf. Figure \ref{fig:pebble}).

In this paper we seek a mathematical justification for this observation.
We study the inverse phenomenon: namely, we seek the numbers and types of static equilibrium points of the families of polyhedra $P^n$ arising as
equidistant $n$-discretizations on an increasingly refined $[u,v]$-grid  of a smooth surface $M$ with $T$ generic equilibrium points, denoted by $m_i$ $(i=1,2,\dots T$).
As $n\to \infty$, $P^n\to M$ and we find that the diameter of each of the $T$ flocks on $P^n$ (appearing around $m_i$) shrink and approach zero.
However, we also find that inside a fixed, $2k \times 2k$ rectangular grid domain (centered at $m_i$), the numbers $S_i^n,U_i^n,N_i^n$ of equilibria
in each flock fluctuate around specific values  $S_i^\star$,$U_i^\star$ and $N_i^\star$ that are independent of the mesh size
and the parametrization of the surface.
We call these quantities the \emph{imaginary equilibrium indices} associated with $m_i$.
We may eliminate the fluctuation of  $S_i^\star$,$U_i^\star$,$N_i^\star$ by averaging over meshes in random
positions (with uniform distributions) and in Theorem~\ref{thm:3Dmainapprox} of Section~\ref{sec:3D}, we obtain the following simple formulae in terms
of the principal curvatures $\kappa_{1,i},\kappa_{2,i} \leq 0$ and the distance $\rho_i$ of $m_i$ from the center of gravity:
\begin{equation} \label{mainresult}
S_i^{\star}=d_i, \hspace{1cm} U_i^{\star}=\kappa_{1,i}\kappa_{2,i}\rho_i^2d_i, \hspace{1cm}  N_i^{\star}=-(\kappa_{1,i}+\kappa_{2,i})\rho_id_i
\end{equation}
where $d_i=1/|(\kappa_{1,i}\rho_i+1)(\kappa_{2,i}\rho_i+1)|$. We also remark that due to (\ref{mainresult}) we have
\[
S_i^\star+U_i^\star-N_i^\star = \left\{ \begin{array}{l} -1, \hbox{ if } m_i \hbox{ is a saddle point,}\\ 1, \hbox{ otherwise,} \end{array}\right.
\]
and thus, by summation over all equilibrium points $m_i$ of $M$, for the full polyhedron $P^n$, the Poincar\'e-Hopf formula is satisfied.

Since imaginary equilibrium indices are defined via local quantities associated with $m_i$, we may interpret them as describing
properties of infinitely fine discretizations. Nevertheless, they also provide approximate prediction for the number and type of localized equilibrium
points for finite, dense meshes.
Beyond those numbers, we also identify the spatial patterns associated with equilibria on fine discretizations
and illustrate these patterns on an ellipsoid (cf. Figure \ref{fig:flocks}).
We also note that these numbers match well with observations on scanned pebble surfaces, as we point out in Section \ref{sec:remarks}, Remark \ref{pebble} and illustrate this
in Figure \ref{fig:pebble}.

In Section~\ref{sec:preliminaries}, we introduce the notations and concepts used in our investigation.
We examine the properties of the equilibrium points of curves in $\Re^2$ in Section~\ref{sec:2D}.
Our results about those of a surface in $\Re^3$ are presented in Section~\ref{sec:3D}.
In Section~\ref{sec:average} we describe a method which, under a slightly special condition, finds a more direct connection between imaginary equilibrium indices
and the numbers of equilibrium points of the approximating surface than the one in the previous sections.
Finally, in Section~\ref{sec:remarks}, we collect our remarks, and propose some open problems.

\section{Preliminaries}\label{sec:preliminaries}

In this paper, we identify points with their position vectors.
For points $p,q \in \Re^d$, we denote the closed segment with endpoints $p, q$ by $[p,q]$.
Furthermore, we denote the standard inner product of $p$ and $q$ by $\langle p,q \rangle$,
and the Euclidean norm of $p$ by $||p||=\sqrt{\langle p,p \rangle}$.
The convex hull of a set $S$ is denoted by $\conv S$, or, if $S=\{ p_1, p_2, \ldots, p_k \}$ is finite, then possibly
by $[p_1,p_2, \ldots, p_k]$.
We denote the origin by $o$.

Since we intend to examine equilibrium points of nonconvex and/or nonsmooth surfaces, we introduce the following types of equilibrium points, which we examine for $d=2$ and $d=3$.

\begin{defn}\label{defn:equi_smooth}
Let $H \subset \Re^d$ be a $C^1$-class hypersurface (i.e. a compact, $1$-co\-di\-men\-sio\-nal $C^1$-submanifold in $\Re^d$), and let $p \in \Re^d \setminus H$. 
If the (unique) tangent hyperplane of $H$ at $m \in H$ is orthogonal to $m-p$, we say that $m$ is an \emph{equilibrium point} of $H$ relative to $p$.
\end{defn}

\begin{defn}\label{defn:equi_poly}
Let $P \subset \Re^d$  be a polyhedral hypersurface (i.e. a compact, $1$-co\-di\-men\-sio\-nal $C^0$-submanifold in $\Re^d$ which is also the polyhedron of a polyhedral complex),
and let $p \in \Re^d \setminus P$.
If, $m \in P$ has a neighborhood $V$ such that the hyperplane $H$, orthogonal to $m-p$ and passing through $m$, supports $\conv (\{ p \} \cup (P \cap V))$,
we say that $m$ is an \emph{equilibrium point} of $P$ relative to $p$.
\end{defn}

For simplicity, we assume that our reference point is the origin $o$.
We deal only with \emph{generic} equilibrium points of $C^2$-hypersurfaces (cf. \cite{Morse}); that is, we assume that at these points the Jacobian of the Euclidean norm function is regular.
In other words, if $\rho$ denotes the distance of $o$ and the equilibrium point $m$,
we assume that $\kappa \neq - \frac{1}{\rho}$, where $\kappa$ is the (signed) curvature of the curve at $m$
for $d=2$, and any of the two fundamental curvatures of the surface at $m$ if $d=3$.

\section{The equilibria of plane curves}\label{sec:2D}

Throughout this section, we assume that the curve under investigation satisfies the $C^3$ differentiability property, and has exactly one equilibrium point $m$.
Note that as a plane curve is a one-dimensional submanifold of $\Re^2$, there is a neighborhood of $m$ in which
the examined curve is given as a simple $r : [\tau_1,\tau_2] \to \Re^2$ three times continuously differentiable curve.
Since for sufficiently fine discretization, all the equilibrium points of the approximating polygon are located in this neighborhood
of $m$, we may assume that the curve is given in the above form.
For simplicity, we may assume that $m=r(0)=(0,\rho)$ with $0 \in (\tau_1,\tau_2)$ and $\rho > 0$, and that the tangent line at $r(0)$ is horizontal;
or in other words, that $\dot{y}(0) = 0$.
Furthermore, we may assume that $\dot{x}(0) > 0$.
We denote the signed curvature of $r$ at $m$ by $\kappa$.

Let $F_n$ denote the $n$-segment equidistant partition of $[\tau_1,\tau_2]$.
If $\frac{i (\tau_2-\tau_1)}{n}$ is a divison point of $F_n$, then we introduce the notation
$p^n_i = r \left( \frac{i (\tau_2-\tau_1)}{n} \right)$.
Let $P^n$ be the polygonal curve with edges of the form $[p^n_{i-1},p^n_i]$.
Note that then $P^n$ is the approximating polygonal curve defined by the equidistant partition $F_n$,
and, furthermore, the vertices of $P^n$ are labeled in a way that
the indices are not necessarily positive integers, but real numbers that are congruent $\mod 1$.
These numbers can be written in the form $\frac{n \tau_1}{\tau_2 - \tau_1} + k$, where $k=0,1,\ldots, n$.

Let $U^n(K)$ denote the number of the equilibrium points of $P^n$ at the vertices $p_i^n$ of $P^n$ satisfying $|i| \leq K$.
Similarly, let $S^n(K)$ be the number of the equilibrium points of $P^n$ lying in the relative interiors of edges $[p_i^n,p_{i+1}^n]$ satisfying $|i| \leq K$.
Our aim is to determine the values of $U^n(K)$ and $S^n(K)$ for `large' values of $n$ and $K$.
We assume that the equilibrium points of $P^n$ are generic; that is, that for any equilibrium at a vertex $p_i^n$, the vector $p_i^n$
is orthogonal to neither $p_i^n-p_{i-1}^n$, nor to $p_i^n-p_{i+1}^n$.
The main result of this section is the following.

\begin{thm}\label{thm:main}
Let $K+K(\kappa,\rho)$ be sufficiently large. Then there is a value of $n_\varepsilon$ such that for every $n > n_\varepsilon$,
we have
\[
U^n(K) = \left\lfloor \frac{|\rho\kappa|}{|\rho\kappa +1|} \right\rfloor \quad \mathrm{ or } \quad
\left\lfloor \frac{|\rho\kappa|}{|\rho\kappa +1|} \right\rfloor+1,
\]
and
\[
S^n(K) = \left\lfloor \frac{1}{|\rho\kappa +1|} \right\rfloor \quad \mathrm{or} \quad
\left\lfloor \frac{1}{|\rho\kappa +1|} \right\rfloor+1.
\]
\end{thm}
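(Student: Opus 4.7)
The plan is to translate the equilibrium conditions into sign patterns for two simple inner products, to read off the answers from a second-order Taylor expansion of $r$ at $m$, and then to count how many sample indices fall into a short real interval.

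First I would observe that a vertex $p_i^n$ is an equilibrium of $P^n$ iff $a_i:=\langle p_i^n,p_{i+1}^n-p_i^n\rangle<0$ and $b_i:=\langle p_i^n,p_i^n-p_{i-1}^n\rangle>0$ (so $\|\cdot\|^2$ has a local maximum at $p_i^n$ along the two incident edges), whereas the open edge $(p_i^n,p_{i+1}^n)$ carries an equilibrium iff $a_i<0<b_{i+1}$ (the foot of the perpendicular from $o$ lies in its relative interior). The identity $b_{i+1}=a_i+\|p_{i+1}^n-p_i^n\|^2$ links the two conditions and shows that the whole count is governed by the single scalar sequence $(a_i)$.

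Next I would Taylor-expand $r$ at $\tau=0$ using $r(0)=(0,\rho)$, $\dot r(0)=(c,0)$ with $c=|\dot r(0)|>0$, the relation $\ddot r(0)=(\ast,\kappa c^2)$ from the signed-curvature formula, and $r\in C^3$. Substituting $\tau_i=i\Delta$ with $\Delta=(\tau_2-\tau_1)/n$, a routine computation gives, uniformly for $|i|\le K$,
\[
a_i=c^2\Delta^2\bigl[(1+\rho\kappa)\,i+\tfrac{\rho\kappa}{2}\bigr]+O(\Delta^3),\quad
b_i=c^2\Delta^2\bigl[(1+\rho\kappa)\,i-\tfrac{\rho\kappa}{2}\bigr]+O(\Delta^3),
\]
together with $\|p_{i+1}^n-p_i^n\|^2=c^2\Delta^2+O(\Delta^3)$. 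Writing $A=1+\rho\kappa$ and $B=\rho\kappa$, the two equilibrium conditions reduce, up to this error, to $iA\in(B/2,-B/2)$ at a vertex and $iA\in(-1-B/2,-B/2)$ on an edge; these are real intervals of respective lengths
\[
\frac{|B|}{|A|}=\frac{|\rho\kappa|}{|\rho\kappa+1|}\quad\text{and}\quad\frac{1}{|A|}=\frac{1}{|\rho\kappa+1|},
\]
independent of $n$ and located within $O(1)$ of $i=0$.

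The last step is a lattice-point count. The admissible indices form the arithmetic progression $\{k+n\tau_1/(\tau_2-\tau_1):k\in\Z\}$, spaced by one, and any real interval of length $L$ contains either $\lfloor L\rfloor$ or $\lfloor L\rfloor+1$ points of such a progression (depending on the progression's offset). Once $K$ exceeds a threshold depending only on $\kappa$ and $\rho$ (the function $K(\kappa,\rho)$), the window $|i|\le K$ contains both equilibrium intervals, and this elementary count yields the two claimed formulae. The hard part will be in exactly this step: verifying that the $O(\Delta)$ error in the endpoints of the two intervals cannot push the count outside the $\{\lfloor L\rfloor,\lfloor L\rfloor+1\}$ alternative. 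The genericity hypothesis $\kappa\ne-1/\rho$ keeps $A\ne0$, so the leading term of $a_i$ has a single root at $i=-B/(2A)$; only the sample index closest to this root can have its sign decided by the cubic Taylor correction, and the analogous statement holds for $b_i$ at $i=B/(2A)$. These two borderline indices are precisely the source of the ``$\pm1$'' ambiguity already built into the statement, and the proof is completed by checking that no other index is affected.
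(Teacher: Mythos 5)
Your proposal is correct and follows essentially the same route as the paper: the same characterization of edge equilibria via $\langle p_i,p_{i+1}-p_i\rangle<0<\langle p_{i+1},p_{i+1}-p_i\rangle$, the same second-order Taylor expansion reducing the conditions to $i(\rho\kappa+1)$ lying in intervals of lengths $\frac{1}{|\rho\kappa+1|}$ and $\frac{|\rho\kappa|}{|\rho\kappa+1|}$, and the same count of a unit-spaced progression ($i$ congruent $\bmod\ 1$) in those intervals, with the genericity assumption disposing of the borderline indices near the interval endpoints exactly as in the paper. Your vertex criterion is the paper's first system of inequalities (the local-maximum case); the paper also writes down the reversed system but shows that, depending on the sign of $\kappa$, only one of the two is feasible, so this changes nothing in the count.
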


\begin{proof}
We use the notations introduced in the preliminary part of this section, and for simplicity,
instead of $p_i^n$ we write only $p_i$.
Let $\Delta_n = \frac{\tau_2-\tau_1}{n}$, which yields that $p_i = r(i \Delta_n)$.

First, we consider the relative interiors of the edges of $P^n$.
Our key observation is that $[p_i,p_{i+1}]$ contains an equilibrium point if, and only if
the following holds:
\begin{equation}\label{eq:equilibrium2}
\langle p_i, p_{i+1}-p_i \rangle < 0 <
\langle p_{i+1}, p_{i+1}-p_i \rangle.
\end{equation}

To find the equilibrium points, we use the second degree Taylor polynomial of $r(\tau)$
with the Lagrange form of the remainder term; that is, we write the curve as
\begin{equation}\label{eq:2Dtaylor}
r(\tau) = r(0)+\dot{r}(0) \tau +
\frac{1}{2} \ddot{r}(0) \tau^2 +
\frac{1}{6} \dddot{r}(\zeta) \tau^3
\end{equation}
for some $\zeta \in (0,\tau)$.

After substituting (\ref{eq:2Dtaylor}) into the left-hand side expression in (\ref{eq:equilibrium2})
and simplifying, we obtain that
\[
\langle p_i, p_{i+1}- p_i \rangle =
(i \dot{x}^2(0) + \frac{1}{2}\rho \ddot{y}(0) + i \rho \ddot{y}(0))\Delta_n^2 + \Err \Delta_n^3,
\]
where $\Err$ is an error term depending on the first three derivatives of $r(\tau)$,
$\Delta_n$ and $i$.
Note that for any fixed value of $i$, or more generally if we keep $i$ bounded, this error term is bounded.
Thus, for any $K$, if $n$ is sufficiently large and if $|i| \leq K$, then the sign of this expression is
determined by the first three terms.
In other words, in this case the left-hand side inequality of (\ref{eq:equilibrium2}) is satisfied if and only if
\[
i \dot{x}^2(0) + \frac{1}{2}\rho \ddot{y}(0) + i \rho \ddot{y}(0) < 0
\]

Note that $\kappa = \frac{\ddot{y}(0)}{\dot{x}^2(0)}$.
Thus, we have that
\[
i (\rho \kappa + 1) < -\frac{1}{2} \rho \kappa .
\]

By a similar argument we obtain that for any value of $|i| \leq K$, if $n$ is sufficiently large,
then the right-hand side inequality of (\ref{eq:equilibrium2}) is equivalent to
\[
-1-\frac{1}{2} \rho \kappa < i (\rho \kappa + 1).
\]
Thus, for any $|i| \leq K$, $\relint [p_i,p_{i+1}]$ contains an imaginary equilibrium point if and only if
\begin{equation}\label{eq:equilibrium4}
-1-\frac{1}{2} \rho \kappa < i (\rho \kappa + 1) < -\frac{1}{2} \rho \kappa.
\end{equation}
Hence, the number of these edges is no more, and if $K$ is sufficiently large then it is equal to, the different values of $i$ that fit in a certain interval of length $\frac{1}{|\rho \kappa + 1|}$.
Since the possible values of $i$ are congruent $\mod 1$,
this number is either $\left\lfloor \frac{1}{|\rho \kappa + 1|} \right\rfloor$, or one more.

Now we deal with the vertices of $P^n$.
Note that $p_i$ is an equilibrium point if, and only if one of the following holds:
\begin{equation}\label{eq:2dvertex1}
\langle - p_i, p_{i+1}- p_i \rangle \geq 0 \quad \mathrm{and}
\langle - p_i, p_{i-1}- p_i \rangle \geq 0,
\end{equation}
or
\begin{equation}\label{eq:2dvertex2}
\langle - p_i, p_{i+1}- p_i \rangle \leq 0 \quad \mathrm{and}
\langle - p_i, p_{i-1}- p_i \rangle \leq 0.
\end{equation}
Using the same kind of argument that we used for edges, we obtain that for
sufficiently large values of $n$ and $K$ and for $|i| \leq K$,
the systems of equation above are equivalent to
\[
\frac{1}{2}\rho\kappa \leq i (\rho \kappa +1) \leq - \frac{1}{2}\rho \kappa
\]
and
\[
- \frac{1}{2}\rho\kappa \leq i (\rho \kappa +1) \leq \frac{1}{2}\rho \kappa,
\]
respectively.

Clearly, if $\kappa > 0$, then the first system can be satisfied for no value of $i$,
and so is the second one if $\kappa < 0$.
Thus, in both cases, the number of the values of $i$ satisfying the corresponding system of equations
is either $\left\lfloor \frac{\rho | \kappa |}{|\rho \kappa + 1|} \right\rfloor$ or one more.
\end{proof}

\section{The equilibria of surfaces}\label{sec:3D}

Similarly like in the previous section, we may assume that the surface, given in the form $r : D \to \Re^3$
with $(u,v) \in D=[u_1,u_2] \times [v_1,v_2]$, satisfies the $C^3$ property.
For simplicity, we examine only the case that the surface has exactly one equilibrium point, namely $m=r(0,0)=(0,0,\rho)$
with $(0,0) \in \inter D$ and $\rho > 0$.
Without loss of generality, we may assume that the tangent plane of $r$ at $m$ is horizontal, and that
its normal vector $n^\star = r_u(0,0) \times r_v(0,0)$ is an outer normal vector of the surface; that is, it points in the direction of the positive half of the $z$-axis.
We denote the two principal curvatures of the surface at $m$ by $\kappa_1$ and $\kappa_2$, and note that, dealing with generic equilibria, we assume
in the rest of the section that $\rho \kappa_1 + 1 \neq 0 \neq \rho \kappa_2 + 1$.
To be able to define the approximating surface, we assume that $r(D)$ is convex; more specifically,
that $r(D)$ is a subset of the boundary of a compact, convex set with nonempty interior, containing $o$ in its interior, which yields, in particular,
that $\kappa_1, \kappa_2 \leq 0$.

Consider an equidistant partition of the rectangle $D$ into $n^2$ homothetic copies of ratio $\frac{1}{n}$.
Similarly like in the previous section, for the indices of the division points of the partition
we use not necessarily integers, but real numbers that are congruent $\mod 1$.
More precisely, we set $p^n_{i,j} = r\left( i\frac{u_2-u_1}{n},j \frac{v_2-v_1}{n} \right)$.
For simplicity, for the point $p^n_{i,j}$, we may use the notation $p_{i,j}$.
Furthermore, we set $\Delta_{u,n}= \frac{u_2-u_1}{n}$, $\Delta_{v,n} = \frac{v_2-v_1}{n}$ and $\lambda = \frac{\Delta_{v,n}}{\Delta_{u,n}}$.

Let $S = \{ p_{i,j}, p_{i+1,j}, p_{i+1,j+1}, p_{i,j+1} \}$, and set $Q=\conv ( S \cup \{ o \})$.
Since for sufficiently large values of $n$ the equilibrium points of $P^n$ are contained in a small neighborhood of $m$ and since $r_u(u,v)$ and $r_v(u,v)$ are continuous functions,
we may assume that $o$ is a vertex of this polyhedron,
and that the rays emanating from $o$ and passing through $p_{i,j}, p_{i+1,j}, p_{i+1,j+1}, p_{i,j+1}$, respectively, are in counterclockwise order.

If $S$ is not coplanar, then either $[p_{i,j},p_{i+1,j+1}]$ or $[p_{i+1,j},p_{i,j+1}]$ is an edge of $Q$, depending on the position of the points of $S$.
Thus, in this case $Q$ has two triangle faces not containing $o$, namely either $[p_{i,j},p_{i+1,j},p_{i+1,j+1}]$ and $[p_{i,j},p_{i+1,j+1},p_{i,j+1}]$, or
$[p_{i,j},p_{i+1,j},p_{i,j+1}]$ and $[p_{i+1,j},p_{i+1,j+1},p_{i,j+1}]$. We denote these faces $F^1_{i,j}$ and $F^2_{i,j}$.
If $S$ is coplanar, we dissect it into two triangles by either $[p_{i,j},p_{i+1,j+1}]$ or $[p_{i+1,j},p_{i,j+1}]$, and regard it as the union of two triangle faces.
Now we define the approximating polyhedral surface $P^n$ as the union of the faces $F^1_{i,j}$ and $F^2_{i,j}$ for all possible indices $(i,j)$.
The vertices of $P^n$ are the points $p_{i,j}$, and the edges and the faces are those described in this paragraph.
We observe that $P^n$ is simplicial, and that it is not necessarily a subset of the boundary of a convex polyhedron.

The equilibrium points of $P^n$ can be vertices, or relative interior points of edges or faces of $P^n$.
We assume that each of these equilibria is generic, that is, if $q$ is an equilibrium of $P^n$ in the relative interior of a face $F$ of dimension $0$, $1$ or $2$,
then for some neighborhood $V$ of $q$, the plane passing through and orthogonal to $q$ contains no point of $V$ other than those of $F$.
We denote the number of the equilibrium points of $P^n$ at a vertex $p_{i,j}$ satisfying $|i| \leq K$ and $|j| \leq K$, by $U^n(K)$.
Similarly, the numbers of the equilibrium points on an edge or a face of $P^n$,
which has a vertex $p_{i,j}$ satisfying $|i|\leq K$ and $|j| \leq K$, by $N^n(K)$ and $S^n(K)$, respectively.
Our aim is to estimate $U^n(K)$, $N^n(K)$ and $S^n(K)$ if $n$ and $K$ are sufficiently large.

Note that, by the conditions for $r$ described before, we have $r_u(0,0) = ( x_u, y_u, 0 )$ and $r_v(0,0) = (x_v, y_v, 0)$,
and that the outer normal vector of the surface at $m$ is
\[
n^{\star} = \frac{r_u (0,0) \times r_v(0,0)}{||r_u (0,0) \times r_v(0,0)||} = (0, 0, 1)
\]

We recall the notions of the first and the second fundamental quantities of a surface. In our setting, for these at the point $m$ we have
\[
E = x_u^2+y_u^2, F= x_u x_v + y_uy_v \hbox{ and } G=x_v^2 + y_v^2.
\]
Furthermore, $L$, $M$ and $N$ are the $z$-coordinates of $r_{uu}(0,0)$, $r_{uv}(0,0)$
and $r_{vv}(0,0)$, respectively.
It is well-known in differential geometry that
\begin{equation}\label{eq:areavector}
EG-F^2 = \big(r_u(0,0) \times r_v(0,0)\big)^2 = (x_uy_v-x_vy_u)^2,
\end{equation}
and that
\begin{equation}\label{eq:princcurve}
\kappa_1 \kappa_2 = \frac{LN-M^2}{EG-F^2}, \hbox{ and } \kappa_1 + \kappa_2 = \frac{EN-2FM+GL}{EG-F^2}.
\end{equation}
According to our conditions, we have $E, G > 0$, $EG-F^2 > 0$, $L,N \leq 0$ and $LN-M^2 \geq 0$.

In our investigation, we examine only the case that $\lambda M \leq L$ and $\lambda M \leq \lambda^2 N$.
Since $LN-M^2 \geq 0$, these two inequalities are satisfied for some values of $\lambda$; for instance,
for the value satisfying $L = \lambda^2 N$.
These conditions ensure that the discretization mesh sizes do not differ radically in the $u$ and $v$ directions.
We note that, in order to obtain the area of a surface by taking the limit of an approximating triangulated surface,
a similar condition is necessary.

In the formulation of the main result of this section, we need the following notations.

\noindent
$\begin{array}{ll}
V_1 = \lambda \rho L(G+\rho N), & W_1 = \lambda^2 \rho N(F+\rho M),\\
V_2 = \rho L(F+\rho M), & W_2 = \lambda \rho N(E+\rho L),\\
V_3 = \lambda^2 \rho(NF-MG), & W_3 = \lambda((EG-F^2)-\rho(MF-LG)),\\
V_4 = \lambda((EG-F^2)-\rho(MF-NE)), & W_4 = \rho(LF-ME),\\
V_5 = \lambda(\rho(NE-MF)+(EG-F^2)), & W_5 = \lambda \rho^2 (\lambda |N| - |M|)(ME-LF),\\
V_6 = F+\rho M, & W_6 = \rho (\lambda |N| - |M|)(E+\rho L),\\
V_7 = \lambda^2 \rho (NF-MG), & W_7 = \lambda \rho (|L| - \lambda |M|)(\rho (LG-MF)+(EG-F^2)),\\
V_8 = \lambda(G+\rho N), & W_8 = \rho (|L| - \lambda |M|)(F+\rho M),\\
V_9 = V_5, & W_9 = \lambda \rho^2 |M|(ME-LF),\\
V_{10} = V_6, & W_{10} = \rho |M|(E+\rho L),\\
V_{11} = V_7, & W_{11} = \lambda^2 |M|(\rho (LG-MF)+(EG-F^2)),\\
V_{12} = V_8, & W_{12} = \rho \lambda |M|(F+\rho M),\\
\end{array}$
and
\begin{eqnarray*}
\Err_U & = & 2+ \frac{\sum\limits_{s=1}^2 \max\{|V_s|,|W_s|,|V_s-W_s|, |V_s+W_s|\} }{\lambda(EG-F^2)|(\kappa_1 \rho + 1)(\kappa_2 \rho + 1) |},\\
\Err_S & =& 4+ 2 \frac{ \sum\limits_{s=3}^4 \max\{|V_s|,|W_s|,|V_s-W_s|, |V_s+W_s|\} }{\lambda (EG-F^2)
|(\kappa_1 \rho + 1) (\kappa_2 \rho + 1)|},\\
\Err_N & = & 6 + \frac{\sum\limits_{s=5}^{12}\max\{|V_s|, |W_s|, |V_s-W_s|, |V_s + W_s|\} }{\lambda (EG-F^2)
|(\rho \kappa_1 + 1)(\rho \kappa_2 + 1)|}.
\end{eqnarray*}

Our main result is the following.

\begin{thm}\label{thm:3Dmainapprox}
Let $K=(\rho,\kappa_1,\kappa_2,\lambda)$ be sufficiently large. Then there is a value $n_\varepsilon$ such that for every $n> n_\varepsilon$,
we have
\begin{eqnarray*}
\left| U^n(K) - \frac{\rho^2 \kappa_1 \kappa_2}{|(\kappa_1 \rho + 1)(\kappa_2 \rho + 1) |} \right| & \leq & \Err_U,\\
\left| S^n(K) - \frac{1}{|(\kappa_1 \rho + 1)(\kappa_2 \rho + 1) |} \right| & \leq & \Err_S,\\
\left| N^n(K) + \frac{(\kappa_1 + \kappa_2)\rho}{|(\kappa_1 \rho + 1)(\kappa_2 \rho + 1) |} \right| & \leq & \Err_N.
\end{eqnarray*}
\end{thm}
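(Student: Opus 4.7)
The plan is to follow the strategy of Theorem~\ref{thm:main}: expand $r(u,v)$ about $(0,0)$ through second order with a Lagrange third-order remainder, substitute into the equilibrium conditions for each of the three equilibrium types (faces, edges, vertices) of $P^n$, and show that, up to an additive error of order $1/n$, the set of $(i,j)$ with $|i|,|j|\le K$ for which the corresponding simplex carries an equilibrium is described by two linear inequalities. Since the admissible $i,j$ are congruent modulo $1$, the count is then the number of lattice points in a planar region, approximated by its area with an additive error controlled by its perimeter.

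First I would compute the second-order Taylor expansion of the vectors $p_{i+1,j}-p_{i,j}$, $p_{i,j+1}-p_{i,j}$, $p_{i+1,j+1}-p_{i,j}$, and of the two diagonals, in terms of $r_u,r_v,r_{uu},r_{uv},r_{vv}$ evaluated at $(0,0)$, with third-order remainders that are uniformly bounded when $|i|,|j|\le K$. For a triangular face $F=F^t_{i,j}$ of $P^n$ the condition for an equilibrium in $\relint F$ is that the foot of the perpendicular from $o$ onto the plane of $F$ lies in $\relint F$; expressing the corresponding barycentric coordinates in a basis of two edge vectors of $F$ and simplifying with (\ref{eq:areavector}) and (\ref{eq:princcurve}), the three barycentric sign conditions collapse to two strict linear inequalities in $(i,j)$ with coefficients $V_3,W_3$ and $V_4,W_4$, modulo an $O(1/n)$ perturbation. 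The admissible $(i,j)$ then fill a parallelogram whose area equals $\frac{1}{|(\kappa_1\rho+1)(\kappa_2\rho+1)|}$, and standard lattice-point estimates give $S^n(K)$ to within $\Err_S$.

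Edge equilibria are analogous but more intricate. An open edge $[p_a,p_b]$ carries an equilibrium iff the foot of the perpendicular from $o$ to the affine line through $p_a,p_b$ lies in $\relint [p_a,p_b]$ and the plane through this foot orthogonal to the foot locally supports the star of $[p_a,p_b]$, which consists of its two incident faces. Writing out these sign conditions for each combinatorial edge type of $P^n$ and running the same Taylor reduction yields the eight coefficient pairs $V_5,W_5,\ldots,V_{12},W_{12}$; the union of the resulting parallelograms has total area $\frac{-(\kappa_1+\kappa_2)\rho}{|(\kappa_1\rho+1)(\kappa_2\rho+1)|}$, whence $N^n(K)$ to within $\Err_N$. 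Vertex equilibria at $p_{i,j}$ are the direct analogue of (\ref{eq:2dvertex1})--(\ref{eq:2dvertex2}): the plane through $p_{i,j}$ orthogonal to $p_{i,j}$ must support the star of $p_{i,j}$ in $P^n$, which after Taylor expansion becomes two linear inequalities in $(i,j)$ with coefficients $V_1,W_1,V_2,W_2$ delimiting a region of area $\frac{\rho^2\kappa_1\kappa_2}{|(\kappa_1\rho+1)(\kappa_2\rho+1)|}$.

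The main obstacle is bookkeeping rather than conceptual. Once the linear inequalities are in hand the lattice-point counting is routine, and the constants $\Err_U,\Err_S,\Err_N$ are quantitative forms of ``area plus perimeter plus constant'' in which the magnitudes of the $V_s,W_s$ control the perimeter. The delicate part is extracting the inequalities correctly: the triangulation of each quad into $F^1_{i,j}\cup F^2_{i,j}$ depends on the sign of a determinant in the Taylor data, and the hypotheses $\lambda M\le L$ and $\lambda M\le \lambda^2 N$ are exactly what guarantees that, for $n$ large, this sign is constant throughout the window $|i|,|j|\le K$, so that the combinatorial type of every face, edge and vertex is uniform. Uniformity of the sign of the $z$-component of each face normal, needed to identify the outer normals used in defining equilibria, follows from $EG-F^2>0$ together with the genericity assumption $\rho\kappa_s+1\neq 0$. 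Gathering these pieces yields the three claimed estimates.
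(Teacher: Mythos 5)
Your overall strategy is the paper's: second--order Taylor expansion with bounded remainder, reduction of each equilibrium condition (vertex, edge, face) to linear inequalities in $(i,j)$, and a lattice--point count whose main term is an area and whose error is controlled by a perimeter-type quantity. However, as written the reduction step contains genuine errors that would block both the main terms and the stated constants. You claim that each condition ``collapses to two strict linear inequalities'' whose solutions ``fill a parallelogram''; two half-planes cannot cut out a bounded region, and in fact none of the three regions is a single parallelogram. For vertices the eight acute-angle conditions produce a centrally symmetric hexagon (a rectangle truncated at two opposite corners by a strip -- and it is precisely here, and in discarding the ``both obtuse'' alternative for edges, that the hypotheses $\lambda|M|\le|L|$, $\lambda|M|\le\lambda^2|N|$ are used); for faces each triangle of $P^n$ yields three inequalities giving a right triangle in the $(X,Y)$-plane, and the two triangles $T_1,T_2$ per quad need not even overlap, with their combined area giving $1/|(\kappa_1\rho+1)(\kappa_2\rho+1)|$; for edges one gets three separate rectangles (horizontal, vertical, diagonal) whose three counts $Z_h,Z_v,Z_d$ must be summed to reach $-(\kappa_1+\kappa_2)\rho/|(\kappa_1\rho+1)(\kappa_2\rho+1)|$. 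Without these shapes the area computations you assert are unsupported.

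Two further points. First, the quantities $V_s,W_s$ are not coefficients of the defining inequalities, as your plan assumes; they arise as coordinate extrema of the preimages $T^{-1}(R)$, $T^{-1}(S)$, etc.\ under the linear map $T$ defining $X,Y$, and they enter only through the half-perimeter (bounding box) bound on the discrepancy between lattice counts in different translates. So the route you describe would not reproduce the specific constants $\Err_U,\Err_S,\Err_N$ in the statement. Second, you attribute the uniform choice of diagonal in each quad to the hypotheses $\lambda|M|\le|L|$, $\lambda|M|\le\lambda^2|N|$; in fact that choice is governed solely by the sign of $M$ (the leading, degree-four coefficient of the relevant signed-volume difference is $M\sqrt{EG-F^2}$, the content of Lemma~\ref{lem:3Dwhichedge}), and the case $M=0$, where this coefficient vanishes, requires its own argument showing that either triangulation yields the same count (the two triangles degenerate to a common square). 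These are fixable within your framework, but as proposed the derivation of the regions, of the error constants, and of the case analysis in $M$ is not yet correct.
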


We can interpret this theorem in the following way.
Our result in Theorem~\ref{thm:main} can be interpreted in a way that the number of a certain type of equilibrium point fluctuates around a specific number that depend only of local 
quantities of the curve at $m$, within an error one. In general, this fluctuation seems random as we compute this numbers for different values of $n$.
Here we have a similar result, with the only difference that the error term is not one, but a complicated expression of the fundamental quantities of the surface and $\lambda$.
In general, this fluctuation seems random; that is, if we compute these numbers for different values of $n$, then the discrepancy between the measured number and the corresponding estimate in the theorem 'averages out'. For more information about the sizes of the error terms, the reader is referred to Remark~\ref{rem:errorterms}, and for a more precise treatment of the fluctuation
to Section~\ref{sec:average}. The error functions in Theorem~\ref{thm:3Dmainapprox} provide a global bound and may in many 
cases grossly overestimate the actual errors. In particular, the mesh ratio 
$\lambda$ as well as the topology of flocks may radically influence the 
actual errors. This can also be observed in our example illustrated in 
 Figure~\ref{fig:flocks} showing a triaxial ellipsoid with axis ratios a:b:c=1.23:1.15:1.

We start with two lemmas that we need in the proof.

\begin{lem}\label{lem:expectation}
Let $S \subset \Re^d$ a nonempty Lebesgue measurable set with finite measure $\lambda(S)$.
Let $p_k$ denote the probability that $x+S$ contains exactly $k$ points of the lattice $\Z^d$, where $x$ is chosen from the cube $[0,1]^d$ using
the uniform distribution.
Define the probability distribution $\sigma$ by $P(\sigma = k) = p_k$.
Then the expected value of $\sigma$ is $E(\sigma) = \lambda(S)$.
\end{lem}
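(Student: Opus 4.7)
The plan is to compute the expectation directly by writing the random count as a sum of lattice-point indicators and then swapping sum and integral. Set $\sigma(x) = \operatorname{card}\bigl((x+S)\cap\Z^d\bigr)$, so that
\[
\sigma(x) = \sum_{z\in\Z^d} \mathbf{1}_{x+S}(z) = \sum_{z\in\Z^d} \mathbf{1}_S(z-x).
\]
Since $x$ is uniform on $[0,1]^d$, we have $E(\sigma) = \int_{[0,1]^d} \sigma(x)\,dx$.

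Next, I would use Tonelli's theorem (the integrand is a sum of non-negative measurable functions) to interchange the sum and the integral:
\[
E(\sigma) = \sum_{z\in\Z^d} \int_{[0,1]^d} \mathbf{1}_S(z-x)\,dx.
\]
For each fixed $z\in\Z^d$, the substitution $y=z-x$ is a measure-preserving bijection from $[0,1]^d$ onto the translated unit cube $C_z := z-[0,1]^d$, so each summand equals $\lambda(S\cap C_z)$.

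The crucial observation is that the family $\{C_z : z\in\Z^d\}$ tiles $\Re^d$ up to a set of Lebesgue measure zero (boundaries of cubes), so
\[
\sum_{z\in\Z^d}\lambda(S\cap C_z) = \lambda(S),
\]
yielding $E(\sigma) = \lambda(S)$ as required.

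There is essentially no obstacle here beyond verifying the measure-theoretic technicalities: one needs $S$ to be measurable (given) and $\lambda(S)<\infty$ (given) to make sense of the expectation, and Tonelli applies because the summands are non-negative. Finiteness of $\lambda(S)$ also guarantees that $\sigma$ is almost surely finite, so $\sigma$ is a bona fide $\Z_{\ge 0}$-valued random variable with well-defined expected value.
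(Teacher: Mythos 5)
Your proof is correct. The paper argues in the same spirit but organizes the computation differently: it first splits $S$ into finitely many measurable pieces $S_1,\dots,S_m$ of diameter less than one, applies linearity of expectation over the pieces, and notes that a piece of diameter less than one (translated into $(-1,0)^d$) can capture at most one lattice point, namely the origin, which it does exactly when $x\in -S_i$; hence its expected count is $\lambda(S_i)$. You instead decompose the count over the lattice, writing $\sigma(x)=\sum_{z\in\Z^d}\mathbf{1}_S(z-x)$, swap sum and integral by Tonelli, and add up the measures $\lambda\bigl(S\cap(z-[0,1]^d)\bigr)$ over the unit-cube tiling. These are dual decompositions of the same double integral: the paper's version needs only finite additivity of expectation and no explicit Fubini-type theorem, while yours is a bit more general and robust --- the paper's finite decomposition into small-diameter pieces implicitly requires $S$ to be bounded (harmless for the polygons, triangles and hexagons it is applied to), whereas your argument covers every measurable $S$ of finite measure and also delivers, as you note, that $\sigma$ is almost surely finite so the stated distribution is well defined.
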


\begin{proof}
First, note that as $\lambda(S)$ is finite, we may write $S$ as the disjoint union
of finitely many Lebesgue measurable sets with diameters less than one. Let these sets be
$S_1, S_2, \ldots, S_m$. Let $\sigma_i$ be the distribution such that
$P(\sigma_i = k)$ is the probability that $x+S_i$ contains $k$ lattice points, where $x$ is chosen
uniformly from the unit cube of $\Re^d$.
By the linearity of the expectation, we have
\[
E(\sigma) = \sum\limits_{i=1}^m E(\sigma_i).
\]

We show that $E(\sigma_i) = \lambda(S_i)$, from which the assertion readily follows.
Without loss of generality, we may assume that $S \subset (-1,0)^d$.
Then for any $x \in [0,1]^d$, $x+S_i$ either contains the origin, or it does not contain any point of $\Z^d$.
Note that the set of points $x$ for which $x+S_i$ contains the origin is $-S_i$, and thus, $E(\sigma_i)=\lambda(S_i)$.
\end{proof}

We remark that, clearly, the lattice $\Z^d$ in the formulation of Lemma~\ref{lem:expectation}
can be replaced by any lattice, if $x$ is chosen from the corresponding fundamental lattice parallelotope.

\begin{lem}\label{lem:3Dwhichedge}
For every value of $K$, if $n$ is sufficiently large then for every vertex $p_{i,j}$
of $P^n$ with $|i| \leq K$ $|j| \leq K$ we have the following:
\begin{itemize}
\item If $M>0$, then $E_+=[p_{i,j},p_{i+1,j+1}]$ is an edge of $P^n$, and
\item if $M<0$, then $E_-=[p_{i+1,j},p_{i,j+1}]$ is an edge of $P^n$.
\end{itemize}
\end{lem}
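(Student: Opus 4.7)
The plan is first to nail down the combinatorial structure of $Q=\conv(S\cup\{o\})$, then to reduce the lemma to a sign check on a single scalar, and finally to evaluate that scalar by a Taylor expansion.

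Since $o$ is a vertex of $Q$ and the four rays from $o$ through the points of $S$ are in counterclockwise order by hypothesis, the link of $o$ in $Q$ is the quadrilateral $p_{i,j},p_{i+1,j},p_{i+1,j+1},p_{i,j+1}$, which forces the four triangles $[o,p_{i,j},p_{i+1,j}]$, $[o,p_{i+1,j},p_{i+1,j+1}]$, $[o,p_{i+1,j+1},p_{i,j+1}]$, $[o,p_{i,j+1},p_{i,j}]$ to be faces of $Q$. The rest of $\bd Q$ (the ``cap'') is then spanned by $S$ alone; when $S$ is not coplanar it is the union of two triangles glued along whichever of the two skew diagonals of the tetrahedron $\conv S$ is the ``far'' edge as seen from $o$. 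The lemma thus reduces to deciding which of $[p_{i,j},p_{i+1,j+1}]$ and $[p_{i+1,j},p_{i,j+1}]$ is the far one.

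In the asymptotic regime $n\to\infty$ with $|i|,|j|\le K$, all four points of $S$ lie within $O(1/n)$ of $m=(0,0,\rho)$, which sits directly above $o$ along the (horizontal) tangent plane, so for $n$ large enough ``farther from $o$'' agrees, up to higher-order corrections, with ``larger $z$-coordinate''. I would therefore compare the $z$-components of the midpoints of the two diagonals. Taylor-expanding $r$ about the base point $(u_0,v_0)=(i\Delta_{u,n},j\Delta_{v,n})$ to second order, the affine and pure-quadratic contributions cancel in
\[
\bigl(r(u_0,v_0)+r(u_0+\Delta_{u,n},v_0+\Delta_{v,n})\bigr)-\bigl(r(u_0+\Delta_{u,n},v_0)+r(u_0,v_0+\Delta_{v,n})\bigr)=\Delta_{u,n}\Delta_{v,n}\,r_{uv}(u_0,v_0)+O(\Delta^3),
\]
so the difference of the two midpoints equals $\tfrac{1}{2}\Delta_{u,n}\Delta_{v,n}\,r_{uv}(u_0,v_0)$ plus a cubically small remainder. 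By continuity of $r_{uv}$ and the fact that $(u_0,v_0)\to(0,0)$ uniformly for $|i|,|j|\le K$, the $z$-component of this difference is $\tfrac{1}{2}\Delta_{u,n}\Delta_{v,n}M+o(\Delta_{u,n}\Delta_{v,n})$ uniformly, which has the sign of $M$ once $n$ exceeds a threshold depending on $K$.

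The main obstacle is the reduction in the second paragraph: turning ``midpoint higher'' into ``edge of $Q$''. I would handle this by writing out the essentially unique affine dependence $\sum\lambda_i q_i=0$, $\sum\lambda_i=0$ among the five points $\{o\}\cup S$ and invoking Radon's theorem. A short calculation in the local quadratic model $r(u,v)\approx m+u\,r_u(0,0)+v\,r_v(0,0)+\tfrac{1}{2}(Lu^2+2Muv+Nv^2)(0,0,1)$ shows that the coefficient of $o$ in this dependence is, to leading order, proportional to $M\Delta_{u,n}\Delta_{v,n}/\rho$; since the sign of that coefficient is exactly what selects one Radon partition over the other, it determines which diagonal is an edge of $Q$. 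Combining with the sign computation above gives the claim: $M>0$ forces $[p_{i,j},p_{i+1,j+1}]$ to be an edge of $P^n$, and $M<0$ forces $[p_{i+1,j},p_{i,j+1}]$ to be one.
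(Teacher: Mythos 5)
Your proposal is correct in substance, and at its core it computes the sign of the very same scalar as the paper, just packaged differently. The paper's proof compares the two ways of splitting $Q=\conv(S\cup\{o\})$ into pairs of tetrahedra with apex $o$ (one pair through $E_+$, one through $E_-$) and shows that the sign of the volume difference $S_+-S_-$ decides which diagonal is an edge; expanding this difference via the Taylor polynomial at $(0,0)$, the leading ($\Delta_{u,n}^4$) coefficient is $M\sqrt{EG-F^2}$, whence the sign of $M$ decides. Your affine--dependence coefficient of $o$ is, up to a nonzero factor, exactly this quantity: both equal the signed volume of the tetrahedron $\conv S$, which measures the non-planarity of the quadrilateral, and your mixed second difference $\Delta_{u,n}\Delta_{v,n}\,r_{uv}+O(\Delta^3)$ is precisely why its leading term is proportional to $M$. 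What your route adds is an explicit justification of the combinatorial step that the paper only asserts (``Observe that if $S_+-S_->0$ then $E_+$ is an edge\dots''): your description of the facial structure of $Q$ (four lateral triangles at $o$ plus a two-triangle cap glued along the far diagonal) and the Radon-partition dichotomy make that reduction precise, and expanding at the base point $(i\Delta_{u,n},j\Delta_{v,n})$ with continuity of $r_{uv}$ handles the uniformity over $|i|,|j|\le K$ cleanly. One point to tighten: the sign of the coefficient of $o$ is only meaningful after normalizing the (projectively unique) dependence, so you must also record that the coefficients of the four surface points are, to leading order of size $\rho\Delta_{u,n}^2$, alternating in sign (this is where the convex position of the rays enters); only then does the much smaller $o$-coefficient, of order $\Delta_{u,n}^2$ after that normalization, act as the tie-breaker between the two admissible partitions $\{o,p_{i,j},p_{i+1,j+1}\}\mid\{p_{i+1,j},p_{i,j+1}\}$ and $\{o,p_{i+1,j},p_{i,j+1}\}\mid\{p_{i,j},p_{i+1,j+1}\}$, and one must also note that a diagonal meeting the interior of $Q$ cannot be an edge. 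Your midpoint-height heuristic alone would not suffice (``farther from $o$'' versus ``on the cap'' is not a general equivalence for a tetrahedron and an external point), but since you use it only as motivation and close the argument through the Radon computation, the proof goes through.
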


\begin{proof}
Recall that if $p_{i,j}$ is sufficiently close to $m$, then
$o$ and the endpoints of $E_+$ and $E_-$ are in convex position, and the rays emanating from $o$
and passing through $p_{i,j}, p_{i+1,j}, p_{i+1,j+1}$ and $p_{i,j+1}$ are
in this counterclockwise order in their conic hull.

Let $S_+$ denote the sum of the volumes of the tetrahedra $[o,p_{i,j},p_{i+1,j},p_{i+1,j+1}]$ and $[o,p_{i+1,j+1},p_{i,j+1},p_{i,j}]$, and $S_-$ the sum of the volumes of $[o,p_{i,j},p_{i+1,j},p_{i,j+1}]$
and $[o,p_{i+1,j},p_{i+1,j+1},p_{i,j+1}]$.
Observe that if $S_+ - S_- > 0$, then $E_+$ is an edge of $P^n$, if $S_+ - S_- < 0$ then $E_-$,
and if $S_+ - S_- = 0$, then the two segments are coplanar.
Note that the signed volume of the tetrahedron spanned by the vectors $a, b$ and $c$ is
$\frac{1}{6}\langle a, b \times c \rangle$.
Thus, the sign of $S_+ - S_-$ coincides with the sign of
\begin{eqnarray*}
V & = &\langle p_{i,j},p_{i+1,j} \times p_{i+1,j+1} \rangle+ \langle p_{i+1,j+1},p_{i,j+1} \times p_{i,j} \rangle \\
& & - \langle p_{i,j},p_{i+1,j} \times p_{i,j+1} \rangle - \langle p_{i+1,j},p_{i+1,j+1} \times p_{i,j+1} \rangle .
\end{eqnarray*}

We use the second degree Taylor polynomial of $r(u,v)$ to approximate the surface.
In other words, we write the surface in the form
\begin{equation}\label{eq:3dTaylor}
r(u,v)= r(0,0) + r_u (0,0) u + r_v(0,0) v + \frac{1}{2} r_{uu}(0,0) u^2 +
r_{uv}(0,0) uv +
\end{equation}
\[
+ \frac{1}{2} r_{vv}(0,0) v^2 + K_1 u^3 + K_2 u^2 v + K_3 u v^2 + K_4 v^3,
\]
where $K_1, K_2, K_3$ and $K_4$ are bounded $\Re^2 \to \Re^3$ functions of $u$ and $v$.

After computing $V$ and substituting $\Delta_{v,n} = \lambda \Delta_{u,n}$,
we obtain a polynomial of $\Delta_{u,n}$.
In this polynomial, the first nonzero coefficient, which is that of $\Delta_{u,n}^4$,
is $M \sqrt{EG-F^2}$.
Thus, for any $|i| \leq K$ and $|j| \leq K$, if $n$ is sufficiently large, then $\sign (S_+ - S_-) = \sign M$.
Hence, if $M > 0$ then $E_+$ is an edge of the convex hull,
if $M < 0$ then $E_-$, and if $M=0$ then it can be both:
it is determined by the higher degree terms in the approximation.
\end{proof}

Now we prove Theorem~\ref{thm:3Dmainapprox}.

\begin{proof}[Proof of Theorem~\ref{thm:3Dmainapprox}]
We prove the three formulas separately.

\noindent
\emph{Part 1: The proof of the assertion for $U^n(K)$.}

As in the proof of Lemma~\ref{lem:3Dwhichedge}, we approximate the surface $r(u,v)$
by its second degree Taylor polynomial.
We start with the observation that $p_{i,j}$ is an equilibrium point if, and only if,
each of the eight angles of the form $\angle (op_{i,j}p_{i+\delta_i,j+\delta_j})$,
where $\delta_i, \delta_j \in \{ -1, 0, 1\}$ and $(\delta_i,\delta_j) \neq (0,0)$, is acute, or each is obtuse.
We note that by Lemma~\ref{lem:3Dwhichedge}, it is sufficient to examine six of the angles.

First, we examine the condition under which $\angle (op_{i,j}p_{i+1,j}) < \frac{\pi}{2}$.
Note that this condition is equivalent to
\begin{equation}\label{eq:csucs1bev}
\langle -p_{i,j}, p_{i+1,j}-p_{i,j} \rangle > 0.
\end{equation}
After substituting (\ref{eq:3dTaylor}) and $\Delta_{v,n} = \lambda \Delta_{u,n}$ into this inequality
and then expanding, the first nonzero term, which does not depend on $K_1, K_2, K_3$ and $K_4$, is of degree two.
Since the coefficients of the remaining terms are bounded functions for any $|i| \leq K$ and $|j| \leq K$,
if $n$ is sufficiently large, then (\ref{eq:csucs1bev}) is satisfied
if and only if the first nonzero term is positive.
We can rewrite this term using the fundamental quantities of the surface, which yields the following inequality:
\begin{equation}\label{eq:csucs1}
X < - \frac{1}{2} \rho L,
\end{equation}
where $X = i (E+\rho L) + \lambda j (F+\rho M)$.
From the condition $\angle (op_{i,j}p_{i-1,j})  < \frac{\pi}{2}$, we obtain similarly that
\begin{equation}\label{eq:csucs2}
\frac{1}{2} \rho L < X.
\end{equation}

We remark that from $\angle (op_{i,j}p_{i+1,j}) > \frac{\pi}{2}$ and $\angle (op_{i,j}p_{i-1,j})  > \frac{\pi}{2}$
it follows similarly that if $n$ is sufficiently large, then $ - \frac{1}{2} \rho L < X < \frac{1}{2} \rho L$.
Since $L \leq 0$, it is a contradiction, and thus, we have that if $p_{i,j}$ is an equilibrium point,
then all the eight angles are acute.

From the condition that the remaining six angles are acute, we have that
\begin{equation}\label{eq:csucs3-4}
\frac{1}{2} \lambda^2 \rho N < Y < - \frac{1}{2} \lambda^2 \rho N,
\end{equation}
where $Y = \lambda i (F+\rho M) + \lambda^2 j (G+\rho N)$, and
\begin{equation}\label{eq:csucs5-8}
\begin{array}{ccccc}
 \frac{1}{2} \lambda^2\rho   N + \lambda \rho  M + \frac{1}{2} \rho  L & < & X + Y & < & -\frac{1}{2}
 \lambda^2 \rho  N - \lambda \rho  M - \frac{1}{2} \rho  L\\
\frac{1}{2} \lambda^2 \rho N - \lambda \rho  M + \frac{1}{2} \rho  L & < & Y - X & < & -\frac{1}{2}
\lambda^2 \rho  N + \lambda \rho  M - \frac{1}{2} \rho  L
\end{array}
\end{equation}

Let $P$ denote the convex polygonal region in the $(X,Y)$-plane defined by the inequalities in (\ref{eq:csucs1}),
(\ref{eq:csucs2}), (\ref{eq:csucs3-4}) and (\ref{eq:csucs5-8}).
We determine the shape of $P$.

Observe first that as $L, N \leq 0$, the first four inequalities define a (possibly degenerate) rectangle $R$.
Next, we note that the two pairs of inequalities in (\ref{eq:csucs5-8})
define two (possibly degenerate) infinite strips.
Recall that $LN-M^2 \geq 0$, which implies that the quadratic forms of $\lambda$ on the right-hand sides of
(\ref{eq:csucs5-8}) are positive semidefinite.
Thus, the two infinite strips defined in (\ref{eq:csucs5-8}) are of nonnegative widths.
From (\ref{eq:csucs5-8}), it follows also that if $M \leq 0$,
then the first strip contains $R$, and otherwise the second one.
Furthermore, since $\lambda |M| \leq |L|, \lambda^2 |N|$, the other strip contains exactly two
(opposite) vertices of $R$. 
Hence, $P$ is a (possibly degenerate) centrally symmetric hexagon
that is obtained as the rectangle $R$ truncated with parallel lines at two opposite vertices.
This hexagon can be observed in Part (c) of Figure~\ref{fig:flocks}

We have that, for sufficiently large $n$, the number of the equilibrium points at the vertices of $P^n$
is equal to the points of a translate of the lattice $\{ (X(i,j),Y(i,j)): i,j \in \Z^2 \}$ contained in $P$;
or, equivalently, the number of lattice points contained in a translate of $P$.
Let $Q$ denote the fundamental parallelogram of the lattice; that is,
$Q= \{ (X(i,j), Y(i,j)) : i,j \in [0,1] \}$.
Then, by Lemma~\ref{lem:expectation}, choosing the translate of $P$ `randomly', the expected value of 
the number of lattice points in it is $\frac{\area(P)}{\area(Q)}$.

It is an elementary computation to show that $\area(P) = \lambda^2 \rho^2 (LN-M^2)$.
On the other hand, $\area(Q)$ is the absolute value of the determinant $D$
of the linear transformation $T$ defining $X$ and $Y$.
Expanding $D$ and using the identities in (\ref{eq:princcurve}), we obtain that
\[
\frac{\area(P)}{\area(Q)} = \frac{\rho^2 \kappa_1 \kappa_2}{\left| (\rho \kappa_1 + 1)(\rho \kappa_2+1)\right|}.
\]

To prove the assertion it suffices to show that the discrepancy between this ratio and the number of the equilibrium points is within $\Err_U$.
Because of the properties of the expected value, for this we need only show that
the difference between the numbers of the lattice points contained in two different translates of $P$ is at most
$\Err_U$, or in other words, that the difference between the numbers of the points of $\Z^2$
contained in two different translates of $T^{-1} (P)$ is at most $\Err_U$.

Note that if we move a convex region parallel to the $x$-axis, then on each line parallel to the $x$-axis, the number
of the points of $\Z^2$ can change by at most one. Thus, the discrepancy is at most the vertical width of the region, rounded up.
We can observe the same property if we move a convex region parallel to the $y$-axis.
Thus, the error term is at most two more than the half-perimeter of the smallest axis-parallel rectangle containing $T^{-1} (P)$.
Clearly, we can estimate this quantity from above by computing the corresponding quantity for the parallelogram $T^{-1}(R)$ instead of the hexagon $T^{-1} (P)$.
Hence, the error term is $Err_U=2+(x_{\max}-x_{\min})+(y_{\max}-y_{\min})$, where $x_{\max}$, $x_{\min}$, $y_{\max}$
and $y_{\min}$ are the extrema of the coordinates of the points of $T^{-1}(R)$.
Clearly, these extrema are obtained at vertices of $T^{-1}(R)$.
We leave to the reader to verify that by computing these vertices we obtain the error term $\Err_U$
of Theorem~\ref{thm:3Dmainapprox}.

\noindent
\emph{Part 2: The proof of the assertion for $S^n(K)$.}

We prove the statement in three different cases.

\emph{Case 1,} $M > 0$.
In this case we need to determine the conditions under which there are equilibrium points on
$[p_{i,j},p_{i+1,j},p_{i+1,j+1}]$ or $[p_{i+1,j+1},p_{i,j+1},p_{i,j}]$.

Consider the face $[p_{i,j},p_{i+1,j},p_{i+1,j+1}]$.
First, we note that an outer normal vector of this triangle face is
\[
\hat{n} = (p_{i+1,j}-p_{i,j}) \times (p_{i+1,j+1} - p_{i,j})= p_{i,j} \times p_{i+1,j} + p_{i+1,j} \times
p_{i+1,j+1} + p_{i+1,j+1} \times p_{i,j}.
\]
Our key observation is the following: There is an equilibrium point on the triangle if and only if
the ray emanating from the origin and perpendicular to the plane of the triangle intersects the triangle.
In other words, it happens if and only if $\hat{n}$ is contained in the conic hull of the triangle;
that is, if the angles between $\hat{n}$ and the inner normal vectors of the faces of the tetrahedron
$[o,p_{i,j},p_{i+1,j},p_{i+1,j+1}]$, apart from $-\hat{n}$, are acute.
Thus, we have three conditions that determine if there is an equilibrium point on the face:
\[
\langle \hat{n} , p_{i,j} \times p_{i+1,j} \rangle > 0;
\]
\[
\langle \hat{n} , p_{i+1,j} \times p_{i+1,j+1} \rangle > 0;
\]
\[
\langle \hat{n} , p_{i+1,j+1} \times p_{i,j} \rangle > 0.
\]

After substituting (\ref{eq:3dTaylor}) and $\Delta_{v,n} = \lambda \Delta_{u,n}$ into these inequalities and expanding,
the first nonzero terms, which do not depend on $K_1, K_2, K_3$ and $K_4$, are of degree $4$ in $\Delta_{u,n}$.
Expressing them with the fundamental quantities of the surface, we obtain that
\begin{equation}\label{eq:face1}
\lambda \rho ME < X, \quad \lambda^2 \rho MF  - \lambda^2(EG-F^2) < Y, \quad X+Y < \lambda \rho ME + \lambda^2 \rho MF,
\end{equation}
where
\[
X = \lambda i \rho (LF-ME) + \lambda^2 j (\rho(MF-NE)-(EG-F^2)) +
\frac{1}{2}\lambda \rho LF-\frac{1}{2}\lambda^2 \rho NE
\]
and
\[
Y = \lambda^2 i (\rho(LG-MF)+(EG-F^2)) + \lambda^3 j \rho (MG-NF) + \frac{1}{2}\lambda^2 \rho LG -
\frac{1}{2} \lambda^3 \rho NF.
\]

Similarly like in Part 1, we estimate the number of the faces with an equilibrium point
with the ratio of the area of the triangle defined in (\ref{eq:face1}) to
the absolute value of the determinant of the affine transformation defining $X$ and $Y$.
An elementary computation yields that the determinant $D$ is
\[
D = \lambda^4 (EG-F^2)^2 (\kappa_1 \rho +1)(\kappa_2 \rho + 1)
\]
and, furthermore, that the inequalities in (\ref{eq:face1}) define a right triangle $T_1$ the legs of which
are of length $\lambda^2 (EG-F^2)$.
Thus, the ratio of the area of $T_1$ to the area of the fundamental parallelogram of the affine transformation
is:
\[
\frac{1}{2\left| (\kappa_1 \rho +1)(\kappa_2 \rho + 1) \right|}.
\]

Now we consider the face $[p_{i+1,j+1},p_{i,j+1},p_{i,j}]$.
An argument similar to the previous one yields the inequalities
\begin{equation}\label{eq:face2}
X < \lambda^2(EG-F^2) - \lambda^2 \rho MF, \quad Y < - \lambda^3 \rho MG, \quad -\lambda^2 \rho MF -
\lambda^3 \rho MG < X + Y,
\end{equation}
where $X$ and $Y$ are defined as in the previous case.
This region is a reflected copy $T_2$ of the triangle $T_1$, and thus
for the ratio of its area and the absolute value of the determinant we obtain the same quantity.

Note that as $M > 0$ and as $E + \lambda F + \lambda^2 G$ is a positive definite quadratic form,
the open half planes defined with the third inequalities of (\ref{eq:face1}) and (\ref{eq:face2})
overlap.
Nevertheless, it may happen that $T_1$ and $T_2$ do not overlap; it happens, for instance,
when $\rho$ is sufficiently large.
These triangles can be observed in Parts (d0), (d1) and (d2) of Figure~\ref{fig:flocks}.

\emph{Case 2}, $M < 0$.
We may apply a consideration similar to that in Case 1.
The faces we need to examine are $[p_{i,j},p_{i+1,j},p_{i,j+1}]$ and $[p_{i+1,j},p_{i+1,j+1},p_{i,j+1}]$.
For the first face, we have the inequalities
\begin{equation}\label{eq:face3}
0 < X, \quad Y < 0, \quad - \lambda^2 (EG-F^2)< Y-X,
\end{equation}
and for the second one, we have
\[
X < \lambda^2 (EG-F^2) + \lambda \rho ME - \lambda^2 \rho MF,
\]
\begin{equation}\label{eq:face4}
\lambda^2 \rho MF - \lambda^3 \rho MG - \lambda^2 (EG-F^2) < Y,
\end{equation}
\[
Y-X < - \lambda \rho ME + 2 \lambda^2 \rho MF - \lambda^3 \rho MG - \lambda^2(EG-F^2),
\]
where $X$ and $Y$ are defined as in Case 1 of Part 2.
These two regions define two triangles. By computing their area
we obtain the same estimate for the number of the equilibrium points as in Case 1.

\emph{Case 3}, $M = 0$.
In this case the coefficient of the the term of degree four in the polynomial $V$ in Lemma~\ref{lem:3Dwhichedge} does not
determine which of the edges $E_+=[p_{i,j},p_{i+1,j+1}]$ and $E_-=[p_{i+1,j},p_{i,j+1}]$ belongs to $P^n$.
Note that to estimate $S^n(K)$ we may use the triangles defined by the inequalities in (\ref{eq:face1}) and (\ref{eq:face2}) from Case 1 if this edge is $E_+$,
and the triangles defined by (\ref{eq:face3}) and (\ref{eq:face4}) if it is $E_-$.
Note that in both cases, these two triangles degenerate into the same square in the $(X,Y)$-plane if $M=0$
(and in this case the two triangles do not overlap).
This square is defined by
\begin{equation}\label{eq:rectangle}
0 < X < \lambda^2 (EG-F^2), - \lambda^2 (EG-F^2) < Y < 0.
\end{equation}
Thus, in the case $M=0$, if a triangle with vertices from amongst $p_{i,j}, p_{i+1,j}, p_{i+1,j+1}$ and $p_{i,j+1}$
contains an equilibrium point, then there is exactly one such face of $P^n$ no matter which edge, $E_+$ or $E_-$ belongs to $P^n$.

We have yet to determine the error term $\Err_S$ of the theorem for an arbitrary value of $M$.
Our method is similar to the one used in Part 1.
Let $T$ be the affine transformation defined by the formulas for $X$ and $Y$.
Let $S$ be an axis parallel square of side length $\lambda^2(EG-F^2)$.
Then $\Err_S$ in any of the cases is at most four more than the perimeter of the parallelogram $T^{-1}(S)$.
Hence, a simple computation yields the required quantity.

\noindent
\emph{Part 3: The proof of the assertion for $N^n(K)$.}

The proof is similar to those on Parts 1 and 2, and thus we just sketch it.

\emph{Case 1}, $M > 0$.
We need to find the conditions under which $[p_{i,j},p_{i+1,j}]$, $[p_{i,j},p_{i,j+1}]$
or $[p_{i,j},p_{i+1,j+1}]$ contains an equilibrium point. For simplicity, we call these edges
\emph{horizontal, vertical} and \emph{diagonal}, respectively.

Consider a horizontal edge $[p_{i,j},p_{i+1,j}]$.
Note that it contains an equilibrium point if and only if the following holds:
\begin{itemize}
\item The angles $\angle (o,p_{i,j},p_{i+1,j})$ and $\angle (o,p_{i+1,j},p_{i,j})$ are acute.
\item The two angles between the triangle $[o,p_{i,j},p_{i+1,j}]$, and
$[p_{i,j},p_{i,j-1},p_{i+1,j}]$ and $[p_{i,j},p_{i+1,j},p_{i+1,j+1}]$, are either both acute, or both obtuse.
\end{itemize}

Using the second degree Taylor polynomial, for the first nonzero terms we obtain the
following inequalities:
\begin{equation}\label{eq:edge1}
0 < X_h < E, 0 < Y_h < - \frac{1}{2} \lambda \rho E (\lambda N + M),
\end{equation}
or
\[
0 < X_h < E, - \lambda \rho E (\lambda N + M) < Y_h < 0,
\]
where
\[
X_h = -i(E+\rho L)-\lambda j(F+\rho M)-\frac{1}{2} \rho L,
\]
and
\[
Y_h = \lambda i \rho (ME-LF) + \lambda^2 j \left((EG-F^2) +\rho (NE-MF) \right)
- \lambda \rho (\lambda NE - MF).
\]
Observe that due to our assumption that $\lambda |M| \leq \lambda^2 |N|$,
the second system of inequalities is not satisfied for any value of $Y_h$.
Note that the first system determines a rectangle $R_h$ in the $(X,Y)$ coordinate-system.

We estimate the number of horizontal edges with an equilibrium point as usual, and obtain the quantity
\[
Z_h = \frac{-E \rho (\lambda N + M)}{\lambda (EG-F^2) | (\rho \kappa_1 +1)(\rho \kappa_2 + 1)|}.
\]

For the vertical edge $[p_{i,j},p_{i,j+1}]$, using similar quantities $X_v$ and $Y_v$,
we obtain another rectangle $R_v$ and the estimate
\[
Z_v = \frac{-G \rho (L+\lambda M)}{(EG-F^2) | (\rho \kappa_1 +1)(\rho \kappa_2 + 1)|}.
\]

For the edge $[p_{i,j},p_{i+1,j+1}]$, we have the inequalities
\begin{equation}\label{eq:edge2}
0 < X_d < E +2 \lambda F + \lambda^2 G, 0 < Y_d < \lambda M (E +2\lambda F + \lambda^2 G)
\end{equation}
or
\[
0 < X_d < E +2 \lambda F + \lambda^2 G, \lambda M (E +2\lambda F + \lambda^2 G) < Y_d < 0,
\]
where
\[
X_d = - i \big( E+\rho L+\lambda(F+\rho M) \big)-j \big( \lambda(F+\rho M)+\lambda^2(G+\rho N) \big)
-\frac{1}{2}\rho (L+2\lambda M+ \lambda^2 N),
\]
and
\[
Y_d= -\lambda i \big( \rho(LF-ME)+\lambda((EG-F^2)+\lambda(LG-MF)) \big) +
\lambda^2 j\big( \rho(NE-MF)+
\]
\[
+ (EG-F^2)+\lambda(NF-MG) \big)+ \frac{1}{2} \lambda \rho (2ME-LF+\lambda(NE+2MF-LG)+\lambda^2 NF).
\]

As $0 < \lambda M (E +2\lambda F + \lambda^2 G)$, we have that only the first system of inequalities,
determining a rectangle $R_d$, has solutions.
Thus, for the number of diagonal edges with an equilibrium point, we obtain
\[
Z_d = \frac{(E+2\lambda F + \lambda^2 G) \rho M}{\lambda (EG-F^2)|(\rho \kappa_1 + 1)(\rho \kappa_2 + 1)|}.
\]
Thus, for the sum of the numbers of the edges with an equilibrium point on them we obtain the estimate:
\[
Z_h + Z_v + Z_d = \frac{-(LN-2MF+NE)\rho}{(EG-F^2)|(\rho \kappa_1 + 1)(\rho \kappa_2 + 1)|} =
\frac{-(\kappa_1 + \kappa_2)\rho}{|(\rho \kappa_1 + 1)(\rho \kappa_2 + 1)|}.
\]

The three different types of edges can be observed in Parts (e0), (e1), (e2) and (e3) of Figure~\ref{fig:flocks}.

\emph{Case 2}, $M < 0$.
We need to examine the edges $[p_{i,j},p_{i+1,j}]$, $[p_{i,j},p_{i,j+1}]$
and $[p_{i+1,j},p_{i,j+1}]$.
We may apply the approach described in the previous case, and obtain similar formulas
for the numbers of horizontal, vertical and diagonal edges that contain an equilibrium point.
In this way, for the sum of the numbers of the edges with equilibrium points we obtain the same estimate.

\emph{Case 3}, $M=0$.
Similarly like in Part 2, we may apply the formulas from the $M > 0$ case if $E_+$ belongs to $P^n$,
and those from the $M<0$ case $E_-$ does.
Note that the inequalities that determine if $[p_{i,j},p_{i+1,j}]$ or $[p_{i,j},p_{i,j+1}]$
contain an equilibrium point degenerate into the same inequalities for $M=0$, no matter which formulas we apply.
Thus, for $Z_h$ and $Z_v$ we obtain the same quantities.
Furthermore, the inequalities that determine if $E_+$ contains an equilibrium point in Case 1, and those
that determine if $E_-$ contains such a point in Case 2, determine a planar figure of zero area if $M=0$,
and thus, if $M=0$, we have $Z_d = 0$.

To compute $\Err_N$, we may apply the approach used in Parts 1 and 2.
We remark that if $M > 0$, then $X_d = X_h+X_v$ and $Y_d = Y_h+Y_v$, and if $M < 0$, then
$X_d = X_h - X_v$ and $Y_d = Y_h - Y_v$.
\end{proof}

\pagebreak

\begin{figure}[here]
\includegraphics[width=\textwidth]{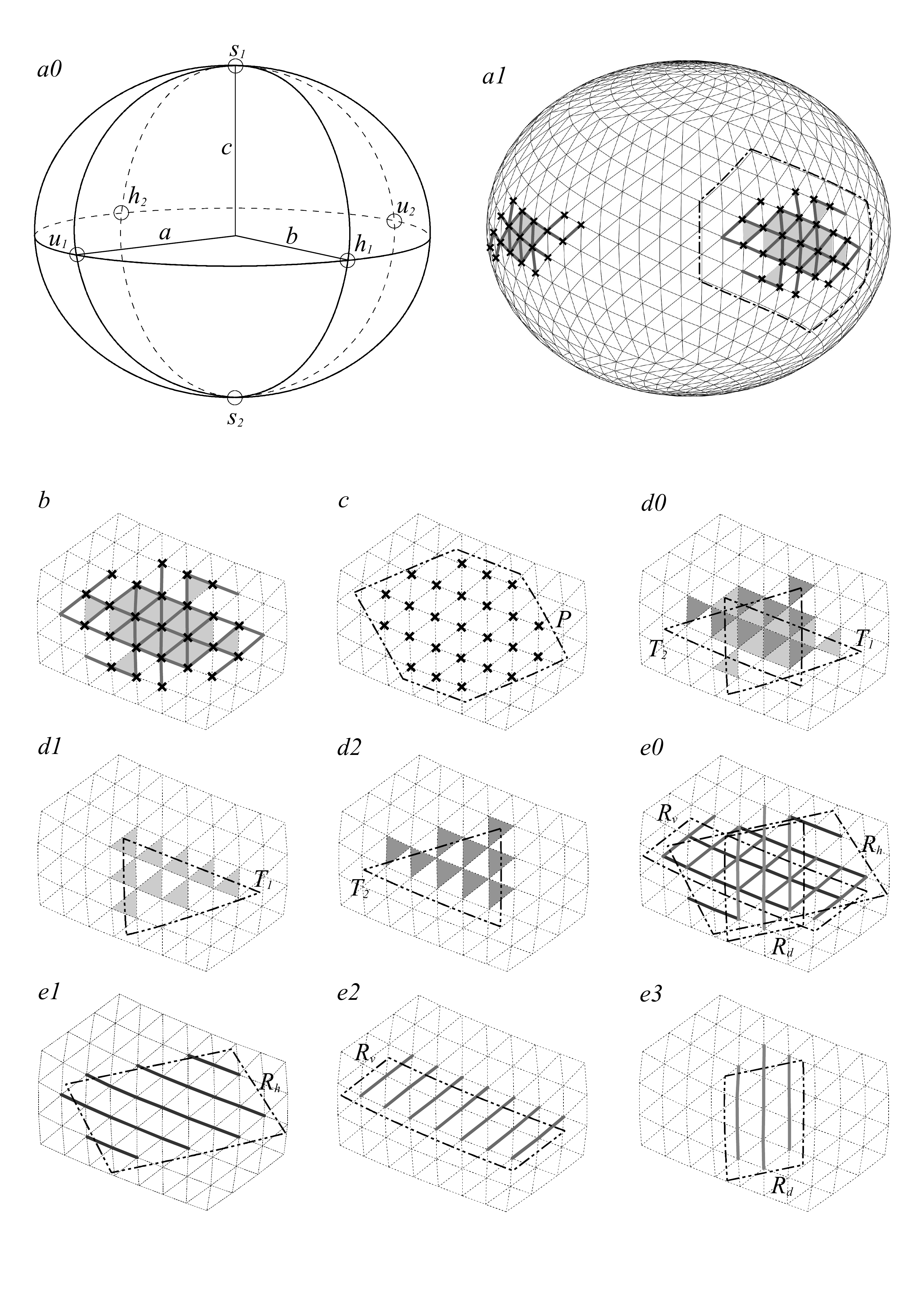}
\caption[]{Flocks of an ellipsoid}
\label{fig:flocks}
\end{figure}

\begin{myquote}{1cm}{1cm}
Figure~\ref{fig:flocks} shows an ellipsoid with axis ratios $a:b:c = 1.25 :1.15 : 1$.

\begin{itemize}
\item Part $a0$ shows the equilibrium points of the  ellipsoid. The stable, unstable and saddle points are denoted by $s_1$ and $s_2$, $u_1$ and $u_2$, and $h_1$ and $h_2$, respectively.
\item Part $a1$ shows the equilibrium points of $P^n$ near $u_1$ and $h_1$. Faces with a stable point are shaded, unstable vertices are marked with $\times$, and edges with a saddle point are drawn with bold lines.
\item Part $b$ shows the equilibrium points of $P^n$ near $h_1$. The zoomed hexagonal region is framed in Part a1.
\item Part $c$ shows the unstable equilibrium points near $n_1$ inside the hexagonal region $P$.
\item Part $d0$ shows the stable equilibrium points near $h_1$ with the triangles $T_1$ and $T_2$. These triangles are separately shown in Parts $d1$ and $d2$, respectively.
\item Part $e0$ shows the saddle type equilibrium points near $h_1$ with the paralelograms $R_1$, $R_2$ and $R_3$. Parts $e1$, $e2$ and $e3$ show these parallelograms separately.
\end{itemize}
The estimates in Theorem~\ref{thm:3Dmainapprox} yield $U^n(K)=20.19$, $S^n(K) = 22.59$ and $N^n(K)  = 43.78$ for the
`average' number of the imaginary equilibrium points in the `vicinity' of $h_1$. For the ellipsoid and the parametrization we used in Figure~\ref{fig:flocks}, we have $U^n(K) = 25$, $S^n(K) = 18$ and $N^n(K) =44$.
The error terms defined in Theorem~\ref{thm:3Dmainapprox} are the following: $\Err_U=15.11, \Err_S=30.04, \Err_N=44.98$. As we can see, the error functions grossly
overestimate the actual errors, as we pointed out after Theorem~\ref{thm:3Dmainapprox}.
\end{myquote}

\section{Results on the average of the equilibrium points of $P^n$}\label{sec:average}

In this section, for $x \in \Re$, $\fraction(x)$ denotes the fractional part of $x$.
We start with the following theorem.

\begin{thm}\label{thm:irrational}
We have the following.
\begin{enumerate}
\item[{\ref{thm:irrational}}.1] Let $I \subset [0,1]$ be a closed interval, and let $\eta$ be an irrational real number.
Let $A_n = \card \{ \fraction(k \eta) \in I : k=1,2,\ldots,n \}$.
Then $\lim\limits_{n \to \infty} \frac{A_n}{n}$ exists and is equal to the length of $I$.
\item[{\ref{thm:irrational}}.2] Let $X \subset [0,1]^2$ be a closed axis parallel rectangle, and
let $\eta_1, \eta_2$ be linearly independent irrational numbers in the vector space $\Re$ over $\Q$.
Let $B_n = \card \{ \big( \fraction(k \eta_1), \fraction(k\eta_2)\big) \in X : k=1,2,\ldots, n )\}$.
Then $\lim\limits_{n \to \infty} \frac{B_n}{n}$ exists and is equal to the area of $X$.
\end{enumerate}
\end{thm}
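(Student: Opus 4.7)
Both parts of Theorem~\ref{thm:irrational} are classical instances of Weyl's equidistribution theorem, so the plan is to pursue the standard harmonic-analytic route via exponential sums. The goal is to first establish that for every continuous function $f$ on the circle $\Re/\Z$ (resp.\ on the torus $(\Re/\Z)^2$) the averages $\frac{1}{n}\sum_{k=1}^n f(k\eta)$ (resp.\ $\frac{1}{n}\sum_{k=1}^n f(k\eta_1, k\eta_2)$) converge to $\int f$, and then to deduce the statement for characteristic functions of closed intervals (resp.\ axis-parallel rectangles) by squeezing between continuous approximations.

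For Part~\ref{thm:irrational}.1, by density of trigonometric polynomials in the space of continuous $1$-periodic functions (Stone--Weierstrass, or Fej\'er's theorem), it suffices to check the character averages $\frac{1}{n}\sum_{k=1}^n e^{2\pi i h k\eta}$ for every $h \in \Z$. The case $h=0$ is trivial. For $h \neq 0$, irrationality of $\eta$ gives $e^{2\pi i h \eta} \neq 1$, and the geometric sum yields
\[
\left| \sum_{k=1}^n e^{2\pi i h k\eta} \right| = \left| \frac{e^{2\pi i h\eta}(e^{2\pi i h n \eta}-1)}{e^{2\pi i h\eta}-1} \right| \leq \frac{2}{|e^{2\pi i h\eta}-1|},
\]
which is bounded independently of $n$, so the average tends to $0$. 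To pass from continuous test functions to $\mathbf{1}_I$, I would sandwich $\mathbf{1}_I$ between continuous $1$-periodic $g_-,g_+$ with $g_- \leq \mathbf{1}_I \leq g_+$ and $\int(g_+ - g_-) < \varepsilon$, apply the established convergence to $g_\pm$, and then let $\varepsilon \to 0$.

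For Part~\ref{thm:irrational}.2, I would mirror the strategy on the $2$-torus. The criterion to verify becomes $\frac{1}{n}\sum_{k=1}^n e^{2\pi i k(h_1\eta_1+h_2\eta_2)} \to 0$ for every $(h_1,h_2)\in\Z^2\setminus\{(0,0)\}$. The condition actually needed, which I interpret as the intended reading of the linear-independence hypothesis, is that $1,\eta_1,\eta_2$ are linearly independent over $\Q$; this guarantees $h_1\eta_1+h_2\eta_2\notin\Z$ for every nonzero integer pair, so the same geometric-series bound applies. Two-dimensional Stone--Weierstrass (trigonometric polynomials in two variables are dense in $C((\Re/\Z)^2)$) plus a sandwich of $\mathbf{1}_X$ between continuous torus functions completes the argument.

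The only mildly delicate point is the approximation at the boundary of $I$ or $X$; since this boundary has Lebesgue measure zero, the sandwich functions $g_\pm$ can be taken piecewise linear, agreeing with the indicator outside arbitrarily thin neighborhoods of the boundary, making the control of $\int(g_+-g_-)$ elementary. Thus no deeper obstacle arises; the whole proof is a clean two-step reduction (trigonometric polynomials, then approximation of indicators).
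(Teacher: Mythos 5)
Your proof is correct, but it follows a genuinely different route from the paper. You prove the statement via Weyl's equidistribution criterion: reduce to exponential sums $\frac1n\sum_{k=1}^n e^{2\pi i k(h_1\eta_1+h_2\eta_2)}$, kill each nontrivial character by the geometric-series bound, invoke density of trigonometric polynomials, and then sandwich the indicator of the interval or rectangle between continuous functions. The paper instead argues elementarily: it quotes a Kronecker-type density result (Lemma~\ref{lem:everywheredense}, from Niven) to find an integer $\bar N$ for which $\fraction(\bar N\eta_1)=\delta_1$ is tiny and the ``slope'' $\fraction(\bar N\eta_2)/\fraction(\bar N\eta_1)=\delta_2$ is tiny, then partitions the orbit into blocks of length $M\bar N$, counts directly how many points of each arithmetic subprogression land in $X$ by slicing the rectangle into nearly full segments of horizontal length $a$, and squeezes $B_n/n$ between explicit bounds of the form $(a\pm\delta)(b\pm\delta)/(1\mp\delta)^2$. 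Your approach is shorter, standard, and immediately yields the extension to arbitrary Jordan measurable sets claimed in Remark~\ref{rem:Jordan} (Riemann integrability is exactly what the sandwich needs), whereas the paper's argument is self-contained apart from the density lemma, avoids Fourier analysis, and gives explicit quantitative control along the blocks. One further point in your favor: you correctly observe that the hypothesis actually needed is that $1,\eta_1,\eta_2$ are linearly independent over $\Q$ (equivalently $h_1\eta_1+h_2\eta_2\notin\Z$ for $(h_1,h_2)\neq(0,0)$); the literal hypothesis that $\eta_1,\eta_2$ alone are $\Q$-independent is insufficient (take $\eta_1=\sqrt2$, $\eta_2=\sqrt2+1$, whose orbit stays on the diagonal), and the same strengthening is what the paper's density lemma implicitly requires, so your reading is the intended one.
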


In the proof we use the following, well-known result about the simultaneous approximation of irrational numbers.
A more general statement is proven, for example, in Theorem 4.4, on page 45 of \cite{N56}.

\begin{lem}\label{lem:everywheredense}
Let $X$ be a closed axis parallel rectangle in the unit square $[0,1]^2$,
and let $\eta_1, \eta_2$ be linearly independent irrational numbers
in the vector space $\Re$ over $\Q$.
Then the set $\{ \big(\fraction(n \eta_1), \fraction(n \eta_2)\big) \in X : n=1,2,3, \ldots )\}$
is everywhere dense.
\end{lem}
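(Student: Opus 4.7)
The plan is to deduce the lemma from Weyl's equidistribution criterion on the two-dimensional torus; this is the classical route to Kronecker's approximation theorem and essentially a specialization of what is cited as \cite{N56}.

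First I would recast the assertion geometrically. The orbit
\[
\mathcal{O} = \{(\fraction(n \eta_1), \fraction(n \eta_2)) : n = 1, 2, 3, \ldots\}
\]
is the forward orbit of $(0,0)$ under the rotation $R: \mathbb{T}^2 \to \mathbb{T}^2$ given by $R(x,y) = (x+\eta_1, y+\eta_2) \bmod 1$ on the torus $\mathbb{T}^2 = \Re^2/\Z^2$. Density in $X$ of the image points lying in $X$ is an immediate consequence of density of $\mathcal{O}$ in the whole torus $\mathbb{T}^2$, so I reduce to showing the latter. In fact I would prove the stronger statement that $\mathcal{O}$ is equidistributed in $\mathbb{T}^2$, as density then follows for free.

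Next I would apply Weyl's criterion: equidistribution of $\mathcal{O}$ is equivalent to
\[
\lim_{N \to \infty} \frac{1}{N}\sum_{n=1}^{N} e^{2\pi i n(k_1 \eta_1 + k_2 \eta_2)} = 0 \qquad \text{for every } (k_1, k_2) \in \Z^2 \setminus \{(0,0)\}.
\]
The inner sum is a finite geometric series with ratio $\zeta = e^{2\pi i (k_1 \eta_1 + k_2 \eta_2)}$, bounded in absolute value by $2/|1 - \zeta|$ provided $\zeta \neq 1$; dividing by $N$ then yields the limit $0$.

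The main obstacle, and in fact the only nontrivial point, is verifying that $\zeta \neq 1$, i.e.\ that $k_1 \eta_1 + k_2 \eta_2 \notin \Z$ for every nonzero $(k_1, k_2) \in \Z^2$. This is precisely the condition that $1, \eta_1, \eta_2$ be linearly independent over $\Q$, which is strictly stronger than $\Q$-linear independence of $\eta_1, \eta_2$ alone (as seen by taking $\eta_2 = 1 - \eta_1$). I would read the lemma's hypothesis ``linearly independent irrational numbers in $\Re$ over $\Q$'' as intended to include linear independence with $1$ as well, since otherwise the conclusion is false; under this reading the verification is immediate, and equidistribution---hence density---follows at once.
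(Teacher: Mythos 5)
Your proof is correct, but be aware that the paper does not actually prove this lemma: it is quoted as a known fact, with a pointer to Theorem 4.4 of Niven's \emph{Irrational Numbers} \cite{N56}, and the proof of that theorem is precisely the Weyl-criterion argument you give (pass to the torus $\Re^2/\Z^2$, reduce equidistribution to the exponential sums $\frac{1}{N}\sum_{n\le N} e^{2\pi i n(k_1\eta_1+k_2\eta_2)}$, sum the geometric series, and use that its ratio is not $1$). So you have supplied the standard proof of the cited result rather than found a genuinely different route; all the steps you outline are sound.

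The one substantive point in your write-up, which deserves to be made explicit, is that the hypothesis as literally stated is too weak, and your reading of it is the correct repair. $\Q$-linear independence of $\eta_1,\eta_2$ alone does not suffice: take $\eta_1=\sqrt2$ and $\eta_2=1-\sqrt2$. These are irrational and linearly independent over $\Q$, yet $\fraction(n\eta_1)+\fraction(n\eta_2)=1$ for every $n\ge1$, so the orbit lies on a single line in $[0,1]^2$ and is nowhere near dense. What the Weyl computation (and Niven's theorem) actually needs is that $1,\eta_1,\eta_2$ be linearly independent over $\Q$, equivalently $k_1\eta_1+k_2\eta_2\notin\Z$ for all nonzero $(k_1,k_2)\in\Z^2$, exactly as you say. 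The same strengthening is silently needed wherever the lemma is invoked, in particular in the hypothesis of Theorem~\ref{thm:average}.2, where $\frac{u_1}{u_2-u_1}$ and $\frac{v_1}{v_2-v_1}$ are likewise only assumed to be linearly independent irrationals.
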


\begin{proof}[Proof of Theorem~\ref{thm:irrational}]
We prove only (\ref{thm:irrational}.2), as the proof of (\ref{thm:irrational}.1)
is just its simplified version.
We set $S=[0,1]^2$ and $p_n = \big(\fraction(n \eta_1), \fraction(n \eta_2)\big)$.
For simplicity, the geometric figures in the proof are imagined as factored by $S$.

Let $a$ and $b$ denote the lengths of the horizontal and the vertical sides $X$, respectively.
By the previous lemma, for every $\delta > 0$, there is a positive integer $\bar{N}$
such that $\fraction(\bar{N} \eta_1) = \delta_1< \delta$ and
$\frac{\fraction(\bar{N} \eta_2)}{\fraction(\bar{N} \eta_1)} = \delta_2 < \delta$.
Note that the conditions of the theorem yield that $\delta_1$ and $\delta_2$ are irrational.

Let $M$ be the largest value with $\fraction(M\bar{N} \eta_2) < 1$.
Then
\[
\left( \frac{1}{\delta_1} -1  \right)\left( \frac{1}{\delta_2} -1  \right) \leq M \leq
\left( \frac{1}{\delta_1} +1  \right)\left( \frac{1}{\delta_2} +1  \right) .
\]

Consider the first $M\bar{N}$ points. We estimate how many of them are contained in $X$.
Pick any value $ 1 \leq k \leq \bar{N}$.
We examine the point set $P=\{ p_k, p_{k+\bar{N}}, p_{k+2\bar{N}}, \ldots, p_{k+(M-1)\bar{N}} \}$.
These points are equidistant points on a line ($\mod S$) with slope $\delta_2$ (cf. Figure~\ref{fig:average}).
We count the number of the points of $P$ contained in $X$.
Those that are are located on segments such that their projections on the $x$-axis, possibily apart from the first and the last one, are $a$ long.
Measured on the $x$-axis, the distance of the points of $P$ is $\delta_1$, and thus each such segment, possibly apart from the first and the last one,
contains at least $\frac{a}{\delta_1}-1$, and at most $\frac{a}{\delta_1}+1$ points.
There are at least $\frac{b}{\delta_2}-1$ `full' segments, and at most $\frac{b}{\delta_2}+1$ segments of any kind.
Hence, the total number of points in the rectangle is at least
$\left( \frac{a}{\delta_1}-1 \right) \left( \frac{b}{\delta_2}-1 \right)$ and at most
$\left( \frac{a}{\delta_1}+1 \right) \left( \frac{b}{\delta_2}+1 \right)$.

\begin{figure}[here]
\includegraphics[width=0.6\textwidth]{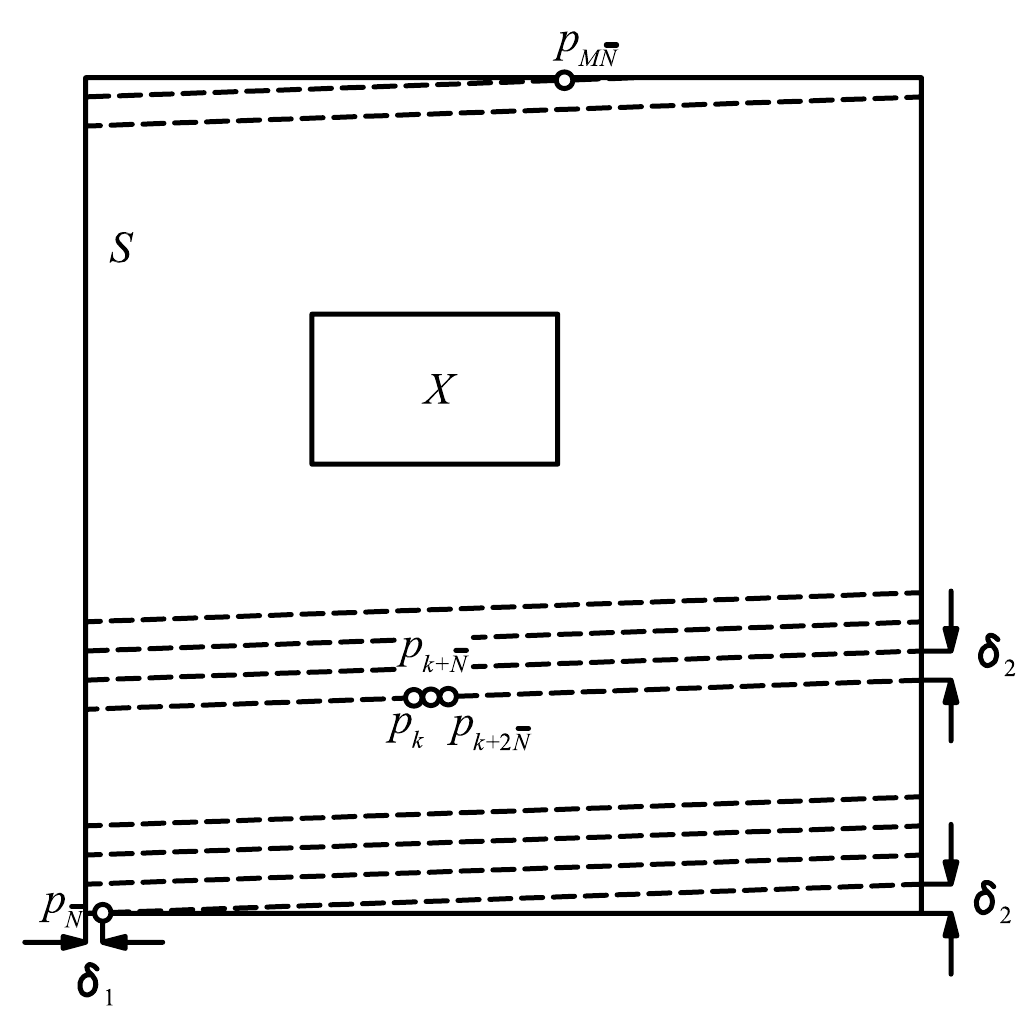}
\caption[]{An illustration for Theorem~\ref{thm:irrational}}
\label{fig:average}
\end{figure}

Thus, we have
\[
\frac{\bar{N} \left( \frac{a}{\delta_1}-1 \right) \left( \frac{b}{\delta_2}-1 \right)}{\bar{N}
\left( \frac{1}{\delta_1}+1 \right) \left( \frac{1}{\delta_2}+1 \right)} < \frac{B_{M\bar{N}}}{M\bar{N}} <
\frac{\bar{N} \left( \frac{a}{\delta_1}+1 \right) \left( \frac{b}{\delta_2}+1 \right)}{\bar{N}
\left( \frac{1}{\delta_1}-1 \right) \left( \frac{1}{\delta_2}-1 \right)}
\]
Observe that as $\delta_1, \delta_2 < \delta$, the inequalities above remain true if we replace
$\delta_1$ and $\delta_2$ by $\delta$.

Clearly, in the way described above we can prove the same inequalities for the second $M\bar{N}$ points, and so on.
Thus, for every value of $k$, we have
\[
\frac{(a-\delta)(b-\delta)}{(1+\delta)^2} < \frac{B_{kM\bar{N}}}{kM\bar{N}} < 
\frac{(a+\delta)(b+\delta)}{(1-\delta)^2} .
\]
Finally, if $kM\bar{N} \leq n < (k+1)M\bar{N}$, then
\[
\frac{B_n}{n} \leq \frac{B_{(k+1)M\bar{N}}}{kM\bar{N}} \leq \frac{(k+1)M\bar{N}}{kM\bar{N}}\frac{B_{(k+1)M\bar{N}}}{(k+1)M\bar{N}} \leq
\frac{n+\bar{N}M}{n-\bar{N}M} \frac{(a+\delta)(b+\delta)}{(1-\delta)^2},
\]
and we may obtain similarly that
\[
\frac{n-\bar{N}M}{n+\bar{N}M} \frac{(a-\delta)(b-\delta)}{(1+\delta)^2} \leq \frac{B_n}{n}.
\]

Now, let $\varepsilon > 0$ be arbitrary.
Then there is a sufficiently small $\delta > 0$ such that $\frac{(a+\delta)(b+\delta)}{(1-\delta)^2} \leq
ab + \frac{\varepsilon}{2} $.
Note that $M\bar{N}$ depends on $\delta$, but it is fixed if $\delta$ is fixed. Thus, for the value of $\delta>0$ above,
there is a positive integer $\bar{N}_1$ such that from $n > \bar{N}_1$ it follows that $\frac{n-\bar{N}M}{n+\bar{N}M} < \frac{ab+\varepsilon}{ab+\frac{\varepsilon}{2}}$,
and thus that $\frac{B_n}{n} < ab+\varepsilon$.
We obtain similarly the existence of a positive integer $\bar{N}_2$ such that $n > \bar{N}_2$ yields that
$\frac{B_n}{n} > ab-\varepsilon$.
Hence, with the notation $N^\star= \max \{ \bar{N}_1, \bar{N}_2 \}$, we have that $n > N^\star$ implies
that $| \frac{B_n}{n} - ab | < \varepsilon$, or in other words, that $\lim\limits_{n \to \infty} \frac{B_n}{n} = ab$.
\end{proof}

\begin{rem}\label{rem:Jordan}
The interval $I$ and the rectangle $X$ in Theorem~\ref{thm:irrational} can be replaced by any
Jordan measurable set.
\end{rem}

Theorem~\ref{thm:irrational} and Remark~\ref{rem:Jordan} yields the following corollary.

\begin{cor}\label{cor:almostthere}
We have the following.
\begin{enumerate}
\item[\ref{cor:almostthere}.1] Let $\eta \in \Re \setminus \Q$ and $X \subset \Re$ be a set
with Jordan measure $\lambda(X)$. Let $A_n = \card (\Z \cap (n \eta + X))$.
Then $\lim\limits_{n \to \infty} \frac{\sum\limits_{k=1}^n A_k }{n} = \lambda(X)$.
\item[\ref{cor:almostthere}.2] Let $\eta_1, \eta_2$ be linearly independent irrational numbers,
and let $X \subset \Eu^2$ be a set with Jordan measure $\lambda(X)$.
Let $B_n = \card \left( \Z^2 \cap ( n \eta_1 , n \eta_2 ) + X \right)$.
Then $\lim\limits_{n \to \infty} \frac{\sum\limits_{k=1}^n B_k}{n} = \lambda(X)$.
\end{enumerate}
\end{cor}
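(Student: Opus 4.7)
The plan is to reduce both parts of the corollary to the equidistribution statement of Theorem~\ref{thm:irrational} (extended to Jordan measurable sets by Remark~\ref{rem:Jordan}), by writing $A_k$ and $B_k$ as values of a single fixed bounded function evaluated at $\fraction(k\eta)$ and $\big(\fraction(k\eta_1),\fraction(k\eta_2)\big)$ respectively, where this function is expressible as a finite sum of indicators of Jordan measurable sets.

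For part~\ref{cor:almostthere}.1, since $\Z$ is invariant under integer translations,
\[
A_k \;=\; \card\big(\Z\cap(k\eta+X)\big) \;=\; \card\big(\Z\cap(\fraction(k\eta)+X)\big) \;=\; f(\fraction(k\eta)),
\]
where $f\colon[0,1)\to\Z$ is defined by $f(t)=\card(\Z\cap(t+X))$. Because $X$ is Jordan measurable it is bounded, so only finitely many lattice translates $m-X$, $m\in\Z$, meet $[0,1)$, and
\[
f \;=\; \sum_{m\in F}\mathbf{1}_{(m-X)\cap[0,1)}
\]
for a finite set $F\subset\Z$. Each summand is the indicator of a Jordan measurable subset of $[0,1]$, and an elementary change of variable gives $\int_0^1 f(t)\,dt = \int_{\Re}\mathbf{1}_X(s)\,ds = \lambda(X)$.

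Next I would apply Theorem~\ref{thm:irrational}.1 together with Remark~\ref{rem:Jordan} to each indicator $\mathbf{1}_{(m-X)\cap[0,1)}$ separately, obtaining
\[
\lim_{n\to\infty}\frac{1}{n}\sum_{k=1}^n \mathbf{1}_{(m-X)\cap[0,1)}(\fraction(k\eta)) \;=\; \lambda\!\big((m-X)\cap[0,1)\big).
\]
Summing over the finitely many $m\in F$ and using linearity of the limit yields
\[
\frac{1}{n}\sum_{k=1}^n A_k \;=\; \frac{1}{n}\sum_{k=1}^n f(\fraction(k\eta)) \;\longrightarrow\; \int_0^1 f(t)\,dt \;=\; \lambda(X),
\]
which is \ref{cor:almostthere}.1. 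For \ref{cor:almostthere}.2 the same template applies verbatim with $g(s,t)=\card\big(\Z^2\cap((s,t)+X)\big)$ in place of $f$: boundedness of $X\subset\Re^2$ yields a finite decomposition $g=\sum_{m\in F'}\mathbf{1}_{(m-X)\cap[0,1)^2}$ with each summand Jordan measurable and $\int_{[0,1]^2}g = \lambda(X)$, and Theorem~\ref{thm:irrational}.2 applied termwise produces the desired Cesàro limit.

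I do not expect a substantive obstacle in this reduction. The one bookkeeping point requiring care is to verify that each piece $(m-X)\cap[0,1)^d$ is Jordan measurable so that Remark~\ref{rem:Jordan} actually applies; this is automatic since Jordan measurability is preserved by translation, reflection, and intersection with the (Jordan measurable) cube $[0,1]^d$, the boundary of which has $d$-dimensional measure zero.
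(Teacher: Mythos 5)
Your proposal is correct and follows essentially the same route as the paper: both reduce the Ces\`aro average to the equidistribution statement of Theorem~\ref{thm:irrational} together with Remark~\ref{rem:Jordan}, via the observation that a lattice point lies in the translated copy of $X$ exactly when the fractional-part vector lies in a suitable Jordan measurable subset of the unit cube. The only difference is bookkeeping: you decompose the counting function over the finitely many lattice translates $(m-X)\cap[0,1)^d$ and apply the theorem termwise, whereas the paper assumes without loss of generality that $\diam X<1$ and $X\subset[-1,0]^2$, so each translate contains at most one lattice point and the relevant set is simply $-X$; both devices rest on the same finite-additivity idea.
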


\begin{proof}
Again, we prove only (\ref{cor:almostthere}.2).
Clearly, we may assume that the diameter of $X$ is less than one and that it is contained in $[-1,0]^2$.
Then it may contain at most one point of the lattice $\Z^2$.
Observe that $( n \eta_1 , n \eta_2 ) + X$ contains a lattice point
if and only if $\big( \fraction(n \eta_1) , \fraction(n \eta_2 ) \big) + X$
contains the origin, which happens if and only if 
$\big( \fraction(n \eta_1) , \fraction(n \eta_2) \big)$ is contained in $-X$.
Since the measure of $-X$ is $\lambda(X)$, from this the assertion readily follows.
\end{proof}

In the previous sections we have seen that the numbers of the different types of equilibrium points 'fluctuate' around specific values
as we change $n$. Roughly speaking, in Theorem~\ref{thm:average} we prove that if the parameter of the equilibrium point
$m$ is positioned 'irregularly' in the parameter range, then the fluctuation 'averages out' in the long run.

\begin{thm}\label{thm:average}
We have the following.
\begin{enumerate}
\item[\ref{thm:average}.1]
Let $r : [\tau_1,\tau_1] \to \Re^2$ satisfy the conditions of Section~\ref{sec:2D}.
Assume that $\frac{\tau_1}{\tau_2-\tau_1}$ is irrational.
Then, using the notations of Section~\ref{sec:2D}, if $K$ is sufficiently large, we have
\begin{eqnarray*}
\lim_{n \to \infty} \frac{\sum\limits_{k=1}^n U^k(K)}{n} & = & \frac{\rho | \kappa |}{|\rho \kappa + 1|} \quad \mathrm{and}\\
\lim_{n \to \infty} \frac{\sum\limits_{k=1}^n S^k(K)}{n} & = & \frac{1}{|\rho \kappa + 1|}.
\end{eqnarray*}
\item[\ref{thm:average}.2]
Let $r : [u_1,u_2] \times [v_1,v_2] \to \Re^3$ satisfy the conditions of Section~\ref{sec:3D}.
Assume that $\frac{u_1}{u_2-u_1}$ and $\frac{v_1}{v_2-v_1}$ are linearly independent irrational numbers.
Then, using the notations of Section~\ref{sec:3D}, if $K$ is sufficiently large, we have
\begin{eqnarray*}
\lim_{n \to \infty} \frac{\sum\limits_{k=1}^n U^k(K)}{n} & = &
\frac{\kappa_1 \kappa_2 \rho^2}{|(\kappa_1 \rho + 1)( \kappa_2 \rho + 1)|},\\
\lim_{n \to \infty} \frac{\sum\limits_{k=1}^n N^k(K)}{n} & = &
\frac{-(\kappa_1 + \kappa_2) \rho}{|(\kappa_1 \rho + 1)( \kappa_2 \rho + 1)|} \quad \mathrm{and}\\
\lim_{n \to \infty} \frac{\sum\limits_{k=1}^n S^k(K)}{n} & = & \frac{1}{|(\kappa_1 \rho + 1)( \kappa_2 \rho + 1)|}.
\end{eqnarray*}
\end{enumerate}
\end{thm}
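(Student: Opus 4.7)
The plan is to combine the geometric description of the equilibrium counts given in the proofs of Theorems~\ref{thm:main} and \ref{thm:3Dmainapprox}---namely that, for sufficiently large $n$, each of $U^n(K), S^n(K), N^n(K)$ equals the number of lattice points in a \emph{translate} of a fixed region in $\Re$ or $\Re^2$---with the equidistribution results of Corollary~\ref{cor:almostthere}. The key observation is that the translation vector depends on $n$ only through the fractional parts of $n\eta := n\tau_1/(\tau_2-\tau_1)$ (in the 2D case), respectively $n\eta_1 := nu_1/(u_2-u_1)$ and $n\eta_2 := nv_1/(v_2-v_1)$ (in the 3D case). Under the irrationality hypotheses, these tuples equidistribute, and Corollary~\ref{cor:almostthere} identifies the Cesàro average of the counts with the Lebesgue measure of the fixed region. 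That measure was already computed in Theorems~\ref{thm:main} and \ref{thm:3Dmainapprox} and equals exactly the right-hand side of the stated identities.

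\textbf{The 2D case.} From the proof of Theorem~\ref{thm:main}, the vertex indices of $P^n$ lying in the neighborhood where the Taylor analysis applies have the form $i = n\eta + k$ with $k \in \Z$. For $n$ large, an equilibrium in $\relint[p_i,p_{i+1}]$ occurs precisely when $i(\rho\kappa+1) \in J_S$ for a fixed open interval $J_S$ of length $1$, and an equilibrium at the vertex $p_i$ occurs precisely when $i(\rho\kappa+1) \in J_U$ for a fixed interval $J_U$ of length $\rho|\kappa|$. Writing $A = \rho\kappa+1$, we obtain
\[
S^n(K) = \card\bigl(\Z \cap (-nA\eta + A^{-1} J_S)\bigr)
\]
and similarly for $U^n(K)$. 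Since $\eta$ is irrational and $A \neq 0$, $A\eta$ is irrational, and Corollary~\ref{cor:almostthere}.1 gives
\[
\lim_{n\to\infty} \frac{1}{n}\sum_{k=1}^n S^k(K) = \lambda(A^{-1}J_S) = \frac{1}{|\rho\kappa+1|},
\]
and similarly $\lim \frac{1}{n}\sum U^k(K) = \rho|\kappa|/|\rho\kappa+1|$.

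\textbf{The 3D case.} The proofs of Parts 1--3 of Theorem~\ref{thm:3Dmainapprox} identify, for each type of equilibrium, a fixed planar region $R^\star$ (a centrally symmetric hexagon for $U^n$, the union of two triangles $T_1 \cup T_2$ for $S^n$, and the union of three rectangles $R_h \cup R_v \cup R_d$ for $N^n$) together with a fixed non-degenerate linear map $T$, such that the count equals the number of points of a suitable translate of $\Z^2$ contained in $T^{-1}(R^\star)$, or, equivalently,
\[
\card\bigl( \Z^2 \cap \bigl( T^{-1}(R^\star) - (n\eta_1, n\eta_2)\bigr)\bigr).
\]
Since $\eta_1,\eta_2$ are linearly independent over $\Q$, Corollary~\ref{cor:almostthere}.2 (combined with Remark~\ref{rem:Jordan}, as $T^{-1}(R^\star)$ is Jordan measurable) yields that the Cesàro average equals $\lambda(T^{-1}(R^\star)) = \area(R^\star)/|\det T|$. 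These ratios were computed in Theorem~\ref{thm:3Dmainapprox} to be $\rho^2\kappa_1\kappa_2/|(\kappa_1\rho+1)(\kappa_2\rho+1)|$, $1/|(\kappa_1\rho+1)(\kappa_2\rho+1)|$ and $-(\kappa_1+\kappa_2)\rho/|(\kappa_1\rho+1)(\kappa_2\rho+1)|$ respectively, which is exactly what is to be proved.

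\textbf{Main obstacle.} Most of the work is already packaged in the previous results; the technical step is to verify that, for each sufficiently large $n$, $U^n(K), S^n(K), N^n(K)$ are \emph{exactly} the lattice-point counts in the relevant translate (not merely up to the bounded error $\Err_\bullet$) so that the lemma on lattice point averages applies cleanly. This holds because, for $K$ fixed and $n\to\infty$, the Taylor remainder in the Taylor expansion (\ref{eq:3dTaylor}) becomes strictly dominated by the leading term uniformly over $|i|,|j|\leq K$, making the equilibrium conditions equivalent to membership in the fixed region. Once this equivalence is in place, linear independence of $\eta_1,\eta_2$ over $\Q$ transports directly through the fixed linear change of coordinates $T$ (being a rational-coefficient transformation of two $\Q$-independent irrationals preserves $\Q$-independence up to a rescaling which does not affect equidistribution modulo the lattice), and Corollary~\ref{cor:almostthere} applies to deliver the three Cesàro limits.
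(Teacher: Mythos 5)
Your proposal follows essentially the same route as the paper: both identify $U^n$, $S^n$, $N^n$ (for large $n$ and $|i|,|j|\leq K$) with the number of lattice points in a fixed Jordan-measurable region translated by an amount determined by $\fraction(n\eta)$, respectively $\bigl(\fraction(n\eta_1),\fraction(n\eta_2)\bigr)$, and then invoke Corollary~\ref{cor:almostthere} together with the area computations already done in Theorems~\ref{thm:main} and \ref{thm:3Dmainapprox}. One tiny slip in your 2D case: the count is $\card\bigl(\Z\cap(A^{-1}J_S-n\eta)\bigr)$, i.e.\ the translation is by $-n\eta$ (so the hypothesis that $\eta$ is irrational is exactly what is needed), not by $-nA\eta$; this does not affect the argument or the resulting limit $\lambda(A^{-1}J_S)=1/|\rho\kappa+1|$.
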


\begin{proof}
We prove only (\ref{thm:average}.2).
Consider, for example, Case 1 of Part 2; that is, the case of $S^n(K)$ with $M > 0$.
In the proof, we estimated the number of points $p_{i,j}$ such that $(i,j)$ is contained 
the triangles $T^{-1}(T_1)$ and $T^{-1}(T_2)$.
Let $\eta_1 = \frac{\bar{u}-u_1}{u_2-u_1}$ and $\eta_2 = \frac{\bar{v}-v_1}{v_2-v_1}$
With the notation used in the proof,
we have that $\fraction(i) = \fraction(n \eta_1)$ and $\fraction(j) = \fraction(n \eta_2)$.
Thus, we need to determine the number of the points of the lattice
$(n \eta_1, n \eta_2 ) + \Z^2$ in $T_1$ or in $T_2$.
Hence the assertion follows from the previous corollary.
In the rest of the cases, we may apply a similar argument.
\end{proof}

\section{Questions and concluding remarks}\label{sec:remarks}

In light of our results in Sections~\ref{sec:2D} and \ref{sec:3D}, we introduce the following notions.

\begin{defn}\label{defn:imaginary_index}
Let $m \in \Re^d$, where $d=2$ or $d=3$, be a generic equilibrium point of a smooth hypersurface $H \subset \Re^d$ with respect to $o$, and set $\rho = ||m||$.
Then the \emph{imaginary equilibrium indices} of $m$ are the quantities:
\begin{itemize}
\item $U^\star = \frac{|\rho \kappa |}{|\rho \kappa + 1|}$ and $S^\star = \frac{1}{|\rho \kappa + 1|}$ for $d=2$, where $\kappa$ is the (signed) curvature of $H$ at $m$;
\item $U^\star = \frac{|\rho^2 \kappa_1 \kappa_2 |}{|(\rho \kappa_1 + 1)(\rho \kappa_2 + 1)|}$, $N^\star = \frac{|\rho (\kappa_1 + \kappa_2 )|}{|(\rho \kappa_1 + 1)(\rho \kappa_2 + 1)|}$
and $S^\star = \frac{1}{|(\rho \kappa_1 + 1)(\rho \kappa_2 + 1)|}$ for $d=3$, where $\kappa_1$ and $\kappa_2$ are the fundamental curvatures of $H$ at $m$.
\end{itemize}
\end{defn}

In our investigation, we found the equilibrium points of $P^n$ with indices
in a given `large' neighborhood of $0$ or $(0,0)$.
It would be convenient to say that if we choose a large value of $K$, then, for a fine discretization, we have found all the
equilibrium points of $P^n$ in this way.
Unfortunately, we cannot do that: our method does not take into account an infinite sequence $p_{i_k}^{n_k}$ of equilibria if $i_k$ is not a bounded sequence
of $n_k$. This happens if, for example, $i_k$ is of order $\sqrt{n_k}$, in which case $p_{i_k}^{n_k} \to m$ is still satisfied.
This observation leads to the following definition.

\begin{defn}
Let $r$ be a curve satisfying the conditions in Section~\ref{sec:2D}, with a unique equilibrium point $m=r(0)$.
If $\{ p_{i_k}^{n_k} \}$ is a sequence of equilibrium points of $P^n$ with
$\lim\limits_{k \to \infty} i_k = \infty$, then the sequence $\{ p_{i_k}^{n_k} \}$ is called an \emph{irregular} equlibrium sequence.
\end{defn}

We may similarly define the notion of an irregular equilibrium sequence for a surface.
Note that if a curve or surface has no irregular equilibrium sequence, then for this curve
or surface the estimates for $U^n(K)$, $N^n(K)$ and $S^n(K)$ in Sections~\ref{sec:2D} and \ref{sec:3D}
hold for the numbers of the different types of all the equilibrium points of $P^n$.

A direct computation shows that if $r : [\tau_1,\tau_2] \to \Re^2$ satisfies the conditions in Section~\ref{sec:2D},
and its coordinate functions are polynomials, then $r$ has no irregular sequences.
This leads to the following question.

\begin{ques}
Prove or disprove that if the coordinate functions of $r$ are analytic functions of $\tau$,
then the curve has no irregular equilibrium sequence. What about surfaces in $\Re^3$?
\end{ques}

Our method was to approximate our smooth surface $S$ with a polyhedral, and thus nonsmooth, surface $P^n$.
We found that no matter how close $P^n$ is to $S$, it may occur that $P^n$ has more equilibrium points than $S$.
This is not the case if we approximate $S$ with a surface the first and second partial derivatives of which
are close to those of $S$.

\begin{rem}
Let $r:D \to \Re^3$ be a $C^2$-class surface satisfying the conditions of Section~\ref{sec:3D}.
Let $\varepsilon>0$ be sufficiently small. Then any $g:D \to \Re^3$ $C^2$-class surface
with the property that the difference between $r(u,v)$ and $g(u,v)$, and between any first or second partial
derivatives of $r$ and $g$ at $(u,v) \in D$ is less, than $\varepsilon$, has exactly one equilibrium $m'$.
Furthermore, $m'$ is stable, unstable or a saddle point if, and only if, $m$ is stable, unstable or a saddle point,
respectively.
\end{rem}

\begin{proof}
We use the notations from Section~\ref{sec:3D}.

Let $r : D \to \Re^3$ be a $C^2$-class surface as in Section~\ref{sec:3D}.
Note that this surface has a stable, unstable or saddle point at $m=r(0,0)$ if exactly two, zero and one of $\rho \kappa_1 + 1$ and $\rho \kappa_2 +1$ is positive.
Thus, since $r$ is $C^2$, if we approximate $r$ by $g$ in a way that the surface, and its first and second partial derivatives change at most $\varepsilon$ for some small $\varepsilon > 0$,
then the signs of $\rho \kappa_1 + 1$ and $\rho \kappa_2 +1$ do not change in a neighborhood of $g(0,0)$.
Clearly, if $\varepsilon$ is sufficiently small, $g$ has no equilibrium outside this neighborhood, and the type of any equilibrium inside
is the same as that of $m$.
Hence, our remark follows from the Poincar\'e-Hopf theorem.
\end{proof}

\begin{rem}
In Section~\ref{sec:3D}, to prove Theorem~\ref{thm:3Dmainapprox}
we made the assumptions that $\lambda |M| \leq |L|$ and $\lambda |M| \leq \lambda^2 |N|$.
Nevertheless, in the opposite case, a slight modification of the proof of the theorem shows
that the estimates in the theorem are, apart from the error terms, lower
bounds for $U^n(K)$, $N^n(K)$ and $S^n(K)$.
\end{rem}

\begin{rem}\label{rem:errorterms}
We have not examined how large the error terms in Theorem~\ref{thm:3Dmainapprox} are compared to the estimates
of $U^n(K)$, $N^n(K)$ and $S^n(K)$. Note that the estimates are approximated by the areas of certain convex
regions, and that the error terms are approximated by the sum of the half perimeters of certain parallelograms
circumscribing these regions. Thus, roughly speaking, if the estimates are `large',
then 'probably' the error terms are `relatively small'.
\end{rem}

\begin{rem}
In Theorem~\ref{thm:average}, we described the number of equilibrium points in a more precise way than in Theorems~\ref{thm:main} and \ref{thm:3Dmainapprox},
under a special condition determined by the location of the parameters of the equilibrium point of $r$ in the domain of $r$.
Note that choosing the parameter range `randomly', the conditions of Theorem~\ref{thm:average} are satisfied with probability one.
\end{rem}

\begin{rem}
Although we deal only with generic equilibrium points, Theorem~\ref{thm:3Dmainapprox} also gives a prediction for the degenerated cases as $\kappa \rho \to -1$.
Assuming $\rho$=1 and $\kappa_1<\kappa_2$,  as $\kappa_1 \to -\infty$ and $\kappa_2 \to -1$ we obtain the following limits:
\bea \label{lim1}
\lim\limits_{\kappa_1 \to -\infty} \lim\limits_{\kappa_2 \to -1} U^\star    =   \lim\limits_{\kappa_2 \to -1} \lim\limits_{\kappa_1 \to -\infty} U^\star  & = &  \infty \\
\label{lim2}
\lim\limits_{\kappa_1 \to -\infty} \lim\limits_{\kappa_2 \to -1} N^\star  = \lim\limits_{\kappa_2 \to -1} \lim\limits_{\kappa_1 \to -\infty} N^\star & = & \infty,
\eea
however, the limit of $S^\star$ is undefined:
\bea
\label{lim3}
\lim\limits_{\kappa_2 \to -1} \lim\limits_{\kappa_1 \to -\infty} S^\star  & =  & 0, \\
\label{lim4}
\lim\limits_{\kappa_1 \to -\infty} \lim\limits_{\kappa_2 \to -1} S^\star   & =  & \infty .
\eea
Degenerated flocks corresponding to this limit are illustrated on a flattened oblate spheroid in Parts (a) and (b) of Figure~\ref{fig:disc-rod}.
In accordance with (\ref{lim1})-(\ref{lim2}) we can observe large numbers of  unstable and saddle points, however, the number of stable equilibrium points 
may be low (cf. Equation (\ref{lim3}) and Part (a) of Figure~\ref{fig:disc-rod}) or high (cf. Equation (\ref{lim4}) and Part (b) of Figure~\ref{fig:disc-rod}).
Similarly, as $\kappa_1 \to -1$ and $\kappa_2 \to 0$, $S^\star$ and $N^\star$ converges to $\infty$, however, the limit of $U^\star$ is undefined. This situation is illustrated on an elongated prolate spheroid in Parts (c) and (d) of Figure~\ref{fig:disc-rod}.
\end{rem}

\begin{figure}[here]
\includegraphics[width=\textwidth]{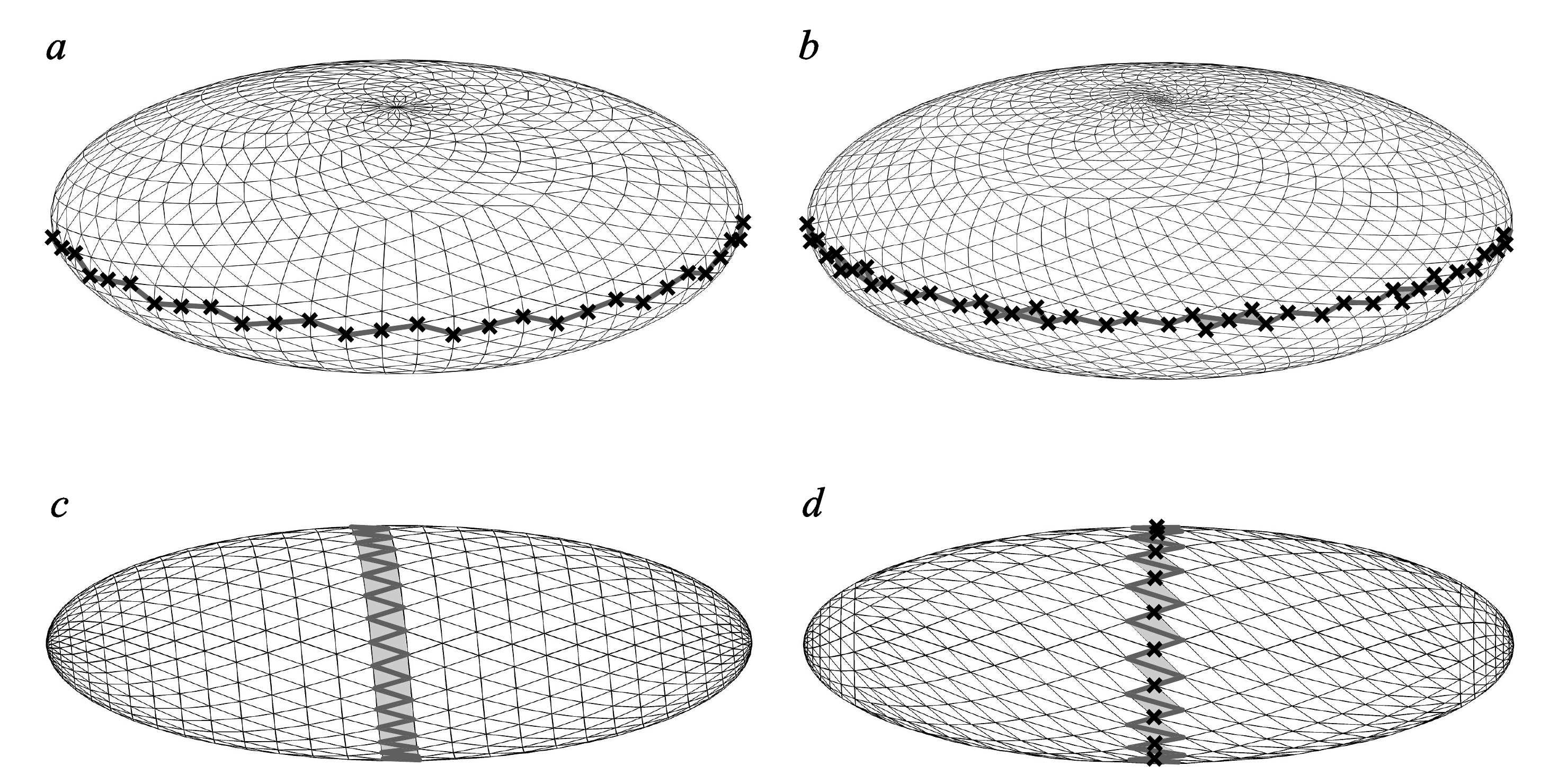}
\caption[]{Flocks on an oblate and on a prolate spheroid}
\label{fig:disc-rod}
\end{figure}

\begin{rem}\label{pebble}
Imaginary equilibrium indices were computed on natural pebble surfaces and were compared with the real number of equilibrium points (Figure~\ref{fig:pebble}). Pebbles were digitized by a high-accuracy 3D scanning method, resulting a dense point cloud. Since equilibrium points are located on the convex hull of the surface, we identified equilibria on the convex hull of the point cloud \cite{Domokos2}. Imaginary equilibrium indices were computed by using a curvature estimation method \cite{Chen} on the triangular mesh. Although the convex hull of a pebble is clearly not an equidistant discretization, as Figure~\ref{fig:pebble} illustrates, imaginary equilibrium indices give a fair estimate on the actual (integer) number of equilibrium points.
\end{rem}

\begin{figure}
\includegraphics[width=\textwidth]{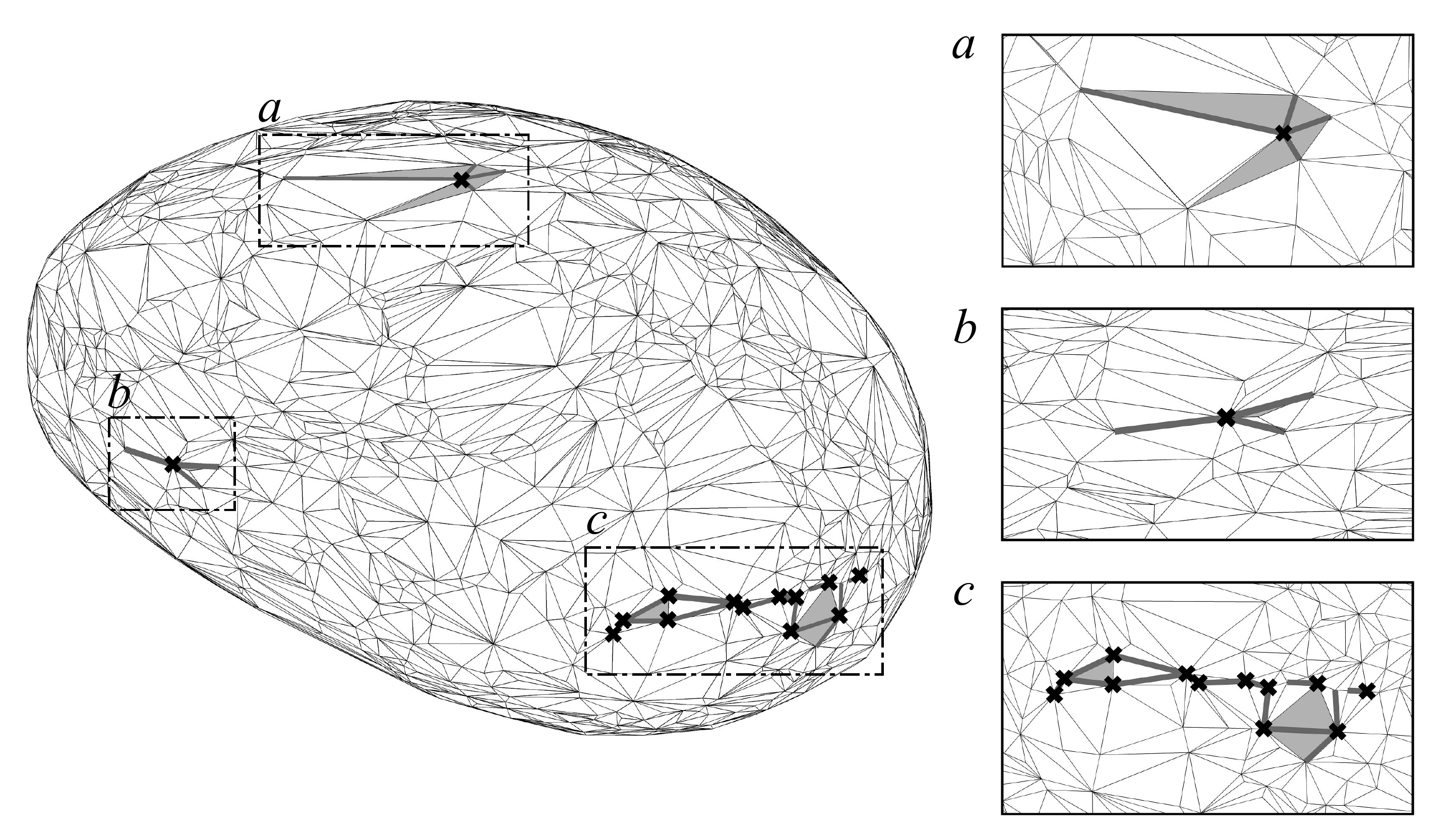}
\caption[]{Flocks on a pebble.
Imaginary equilibrium indices ($S^\star,U^\star,N^\star$) predicted by ÿ(\ref{mainresult}) and actual numbers of equilibrium points ($S,U,N$) measured on the pebble:
\begin{tabular}{|c||c|c||c|c||c|c|}
\hline
flock & $S^\star$ & $S$ & $U^\star$ & $U$ & $N^\star$ & $N$ \\
\hline 
\hline
a & 4.84 & 4 & 0.60 & 1 & 4.44 & 4 \\
\hline
b & 0.72 & 1 & 1.03 & 1 & 2.75 & 3 \\
\hline
c & 2.88 & 3 & 9.55 & 12 & 11.43 & 14 \\
\hline
\end{tabular} }
\label{fig:pebble}
\end{figure}

Clearly, imaginary equilibrium indices can be defined for hypersurfaces $H \subset \Re^d$ for $d>3$ as well.
Hence, a natural question is to ask about higher dimensional analogues of our theorems.

\section{Acknowledgement}
The results discussed above  are supported by the Hungarian Research Fund (OTKA) grant 72146  and grant T\'AMOP -
4.2.2.B-10/1--2010-0009.
The authors are indepted to M\'arton Nasz\'odi for his helpful remarks regarding the results in Section~\ref{sec:average}.

\end{document}